\newcommand{\Acal}{{\mathcal{A}}}
\newcommand{\Ecal}{{\mathcal{E}}}
\newcommand{\Hcal}{{\mathscr{H}}}
\newcommand{\Lcal}{{\mathscr L}}
\newcommand{\Mcal}{{\mathscr M}}
\newcommand{\Ocal}{{\mathcal{O}}}
\newcommand{\Vcal}{{\mathcal{V}}}
\newcommand{\Xcal}{{\mathscr X}}
\newcommand{\an}{{\mathrm{an}}}
\newcommand{\C}{\mathbb{C}}
\newcommand{\N}{\mathbb{N}}
\newcommand{\Q}{\mathbb{Q}}
\newcommand{\R}{\mathbb{R}}
\newcommand{\Z}{\mathbb{Z}}
\renewcommand{\and}{\operatorname{and}}
\DeclareMathOperator{\en}{E}
\DeclareMathOperator{\env}{{P}}
\DeclareMathOperator{\Spec}{{Spec}}
\DeclareMathOperator{\vol}{ {vol}}
\DeclareMathOperator{\abs}{{|\ |}}
\DeclareMathOperator{\length}{c}
\newcommand{\Xan}{{X^{\an}}}
\newcommand{\Ko}{{\ensuremath{K^\circ}}}
\newcommand{\Koo}{{K^{\circ\circ}}}
\newcommand{\Kt}{{\tilde{K}}}
\newcommand{\cA}{\mathcal{A}}
\newcommand{\cE}{\mathcal{E}}
\newcommand{\cL}{\mathcal{L}}
\newcommand{\cM}{\mathcal{M}}
\newcommand{\cO}{\mathcal{O}}
\newcommand{\cU}{\mathcal{U}}
\newcommand{\cV}{\mathcal{V}}
\newcommand{\cX}{\mathcal{X}}
\renewcommand{\d}{\delta}
\newcommand{\e}{\varepsilon}
\newcommand{\la}{\lambda}
\newcommand{\p}{\psi}
\newcommand{\ie}{{\rm i.e.\ }}
\newcommand{\n}{\|\cdot\|}
\newcommand{\redu}{\operatorname{red}}
\theoremstyle{plain}
\newtheorem{thm}{Theorem}[section]
\newtheorem{prop}[thm]{Proposition}
\newtheorem{lemma}[thm]{Lemma}
\newtheorem{lem}[thm]{Lemma}
\theoremstyle{definition}
\newtheorem{defi}[thm]{Definition}
\theoremstyle{remark}
\newtheorem{exam}[thm]{Example}
\newtheorem{example}[thm]{Example}
\newtheorem*{thmA}{{\bf Theorem A}} 
 \newtheorem*{thmB}{{\bf Theorem B}}
\newtheorem*{corC}{{\bf Corollary C}} 
\newtheorem*{corD}{{\bf Corollary D}}
\theoremstyle{remark}
\newtheorem*{ackn}{Acknowledgment}
\numberwithin{equation}{section} % Number formulas within sections
\date{\today}
\thanks{S\'ebastien Boucksom was partly supported by the ANR project GRACK. Walter Gubler and Florent Martin were supported by the collaborative research 
	center SFB 1085 funded by the Deutsche Forschungsgemeinschaft.
}
\begin{document}

\title{Differentiability of relative volumes over an arbitrary non-Archimedean field}
\author{S\'ebastien Boucksom}
\address{S.~Boucksom, Centre de Math\'ematiques Laurent Schwartz, Ecole Polytechnique and CNRS, Institut Polytechnique de Paris}
\email{sebastien.boucksom@polytechnique.edu}

\author[W.~Gubler]{Walter Gubler}
\address{W. Gubler, Mathematik, Universit{\"a}t 
	Regensburg, 93040 Regensburg, Germany, ORCID: 0000-0003-2782-5611}
\email{walter.gubler@mathematik.uni-regensburg.de}

\author[F.~Martin]{Florent Martin}
\address{F. Martin, Mathematik, Universit{\"a}t 
	Regensburg, 93040 Regensburg, Germany}
\email{florent.guy.martin@gmail.com}

\begin{abstract}
Given an ample line bundle $L$ on a geometrically reduced projective scheme defined over an arbitrary non-Archimedean field, we establish a differentiability property for the relative volume of two continuous metrics on the Berkovich analytification of $L$, extending previously known results in the discretely valued case. As applications, we provide fundamental solutions to certain non-Archimedean Monge--Amp\`ere equations, and generalize an equidistribution result for Fekete points. Our main technical input comes from determinant of cohomology and Deligne pairings.

	\bigskip
	
	\noindent
	MSC: Primary 32P05; Secondary  14G22, 32U15, 32W20
\end{abstract}

\maketitle

\setcounter{tocdepth}{1}
\tableofcontents

\section*{Introduction}

In~\cite{BBGZ}, a variational approach to the resolution of complex Monge--Amp\`ere equations was introduced, inspired by the classical work of Aleksandrov on real Monge--Amp\`ere equations and the Minkowski problem. A key ingredient in this approach is a differentiability property for relative volumes, previously established in~\cite{BB}. 

This variational approach was adapted in~\cite{nama} to non-Archimedean Monge--Amp\`ere equations in the context of Berkovich geometry. While most of the results in that paper assumed the non-Archimedean ground field $K$ to be discretely valued and of residue characteristic $0$, the proof of the differentiability property required a stronger algebraicity assumption that was later removed in~\cite{BGJKM}. Building on these results, a version for trivially valued fields was obtained in~\cite{trivval}, with a view towards the study of K-stability~\cite{nakstab}. 

The main result of the present paper establishes the differentiability property over an arbitrary non-Archimedean field. While only one ingredient in the variational approach, it can already be used to construct fundamental solutions to Monge--Amp\`ere equations, and to generalize the results of~\cite{BE} on equidistribution of Fekete points. Our strategy follows overall that of~\cite{BGJKM}, itself inspired by techniques of Abbes--Bouche~\cite{AB} and Yuan~\cite{Yuan} in the context of Arakelov geometry. As in~\cite{BE}, the extra technical input enabling us to deal with possibly non-Noetherian valuation rings is provided by the Deligne pairings machinery.

Working over non-discretely valued fields arises naturally in several contexts. First, Berkovich analytifications over trivially valued fields form a natural setting to study K-stability, as advocated in~\cite{nakstab}. Next, any non-Archimedean field which is non-trivially valued and algebraically closed (such as $\C_p$) is densely valued. Another instance is in Arakelov theory, where computing the relative height of a projective variety $X$ defined over the function field $F$ of an adelically polarized projective variety $B$ over $\Q$ leads naturally to a bunch of non-Archimedean absolute values on $F$ satisfying a product formula. Here, the absolute values over a prime $p$ are induced by Zariski dense points of the Berkovich analytification of $B \otimes \Q_p$, and are usually not discrete. For details about this generalization of Moriwaki's heights, we refer to~\cite[\S 3]{gubler-hertel}.

\subsection*{Differentiability of relative volumes}
In what follows, $K$ denotes an arbitrary (complete) non-Archimedean field, $X$ is a geometrically reduced projective $K$-scheme, and $L$ is an ample line bundle on $X$. Set $n\coloneqq\dim X$, and denote by $X^\an$ the associated Berkovich analytic space. 

The data of a continuous metric $\phi$ on (the analytification of) $L$ induces for each $m\in\N$ a supnorm $\n_{m\phi}$ on the space of sections $H^0(mL)=H^0(X,mL)$. Here and throughout the paper, we use additive notation for line bundles and metrics, see~\S\ref{sec:metrics}. Given a second continuous metric $\p$ on $L$, one defines the \emph{relative volume} of the associated supnorms as 
$$
\vol(\n_{m\phi},\n_{m\p})\coloneqq\log \left(\frac{\det\n_{m\p}}{\det\n_{m\phi}}\right),
$$
where $\det\n_{m\phi}, \det\n_{m\p}$ denote the induced norms on the determinant line $\det H^0(mL)$. %and $N_m\coloneqq\dim H^0(mL)$. 
This notion of relative volume, introduced in~\cite{CMac,BE}, can be described in terms of (virtual) lengths in the discretely valued case as in \cite{BGJKM}. As a consequence of Chen and Maclean's work~\cite{CMac}, it is proved in~\cite[Theorem 9.8]{BE} that the \emph{relative volume} of $\phi,\p$
$$
\vol(L,\phi,\p):=\lim_{m\to\infty}\frac{n!}{m^{n+1}}\vol(\n_{m\phi},\n_{m\p})
$$
exists in $\R$. 

When $K$ is non-trivially valued, a continuous metric on $L$ is called \emph{psh}\footnote{A shorthand for \emph{plurisubharmonic.}} (or \emph{semipositive}) if it can be written as a uniform limit of metrics on $L$ induced by nef models of $L$. This definition, which goes back to the work of Shou-Wu Zhang~\cite{zhang-95}, is not adapted to the trivially valued case, where the trivial metric on $L$ is the only model metric. An alternative description of psh metrics relying on Fubini--Study can however be adopted~\cite{BE,trivval}, the upshot being that a continuous metric $\phi$ on $L$ is psh if and only if it becomes psh after base change to some (equivalently, any) non-Archimedean extension of $K$. Both approaches give the same  psh metrics on $L$ in the non-trivially valued case and the latter works also in the trivially valued case.

For a continuous psh metric $\phi$ on $L$, a positive Radon measure $(dd^c\phi)^n$ on $X^\an$
was constructed by Chambert-Loir~\cite{CL} for $K$ discretely valued; the general case can be obtained from~\cite{gubler-2007a} by base change to an algebraically closed non-trivially valued extension of $K$, or directly from the local approach in~\cite{ChaDuc}. The main result of the present paper is as follows.

\begin{thmA} \it{Let $K$ be an arbitrary non-Archimedean field $K$, $X$ a projective, geometrically reduced $K$-scheme, and $L$ an ample line bundle on $X$. For any continuous psh metric $\phi$ on $L$ and any continuous function $f$ on $X^\an$, we then have} 
	\begin{equation} \label{diff property}
	\frac{d}{dt}\bigg|_{t=0}\vol(L,\phi+tf,\phi)=\int_{X^\an} f\,(dd^c\phi)^n.
	\end{equation}
\end{thmA} 
Such a differentiability property was already predicted by Kontsevich and Tschinkel in their pioneering investigations of non-Archimedean pluripotential theory~\cite{KT}. A version of Theorem A when $L$ is merely nef will be established in a subsequent paper. 

In the discretely valued case, Theorem A was proved in~\cite{BGJKM}, and the present proof follows the same overall strategy. As a first step, we reduce to the case where $K$ is algebraically closed and non-trivially valued, and $\phi=\phi_\Lcal$, $f=\pm\phi_D$ are respectively induced by an ample model $\Lcal$ of $L$ and a vertical effective Cartier divisor $D$, both living on some model $\Xcal$ of $X$. A filtration argument that goes back to Yuan's work~\cite{Yuan} yields an estimate for 
$$
\vol(\n_{m(\phi+f)},\n_{m\phi})
$$
in terms of the \emph{content} $h^0(D,m\Acal|_D)$ of the torsion $K^\circ$-module $H^0(D,m\Acal|_D)$, where $\Acal$ is a certain ample line bundle on $\Xcal$, and the content is a version of the length adapted to the non-Noetherian valuation ring $K^\circ$. The key ingredient is then the asymptotic Riemann--Roch formula 
$$
h^0(D,m\Acal|_D)\sim\frac{m^n}{n!}\int_{X^\an}\phi_D\,(dd^c\phi_\Acal)^n,
$$
which we obtain as a consequence of the results on determinant of cohomology and metrics on Deligne pairings established in~\cite{BE}. 
%
%%%%%%%%%%%%%%%%%%%%%%%%%%%%%%%%%%%%%%%%%%%%%%%%%
%
\subsection*{Applications to non-Archimedean pluripotential theory}
The \emph{relative Monge--Amp\`ere energy} of two continuous psh metrics $\phi,\p$ on $L$ is defined as 
$$
\en(\phi,\p):=\frac{1}{n+1}\sum_{j=0}^n\int_{X^\an}(\phi-\p)(dd^c\phi)^j\wedge(dd^c\p)^{n-j},
$$
where $\phi-\p$ is a continuous function on $X^\an$, in our additive notation for metrics. Given any other continuous psh metric $\phi'$, we have 
$$
\frac{d}{dt}\bigg|_{t=0}\en\left((1-t)\phi+t\phi',\p\right)=\int_{X^\an}(\phi'-\phi)\,(dd^c\phi)^n, 
$$
which means that $\phi\mapsto\en(\phi,\p)$ is the unique antiderivative of the Monge--Amp\`ere operator $\phi\mapsto(dd^c\phi)^n$ that vanishes at $\p$, and implies the cocycle property
$$
\en(\phi_1,\phi_2)=\en(\phi_1,\phi_3)+\en(\phi_3,\phi_2)
$$
for any three continuous psh metrics $\phi_1,\phi_2,\phi_3$ on $L$. 

\smallskip

Next, the \emph{psh envelope} $\env(\phi)$ of a continuous metric $\phi$ on $L$ is defined as the pointwise supremum of the family of (continuous) psh metrics $\p$ on $L$ such that $\p\le\phi$. We say that \emph{continuity of envelopes} holds for $(X,L)$ if $\env(\phi)$ is continuous, hence also psh, for all continuous metrics $\phi$. As observed in~\cite[Lemma 7.30]{BE}, continuity of envelopes is equivalent to the fact that the usc upper envelope of any bounded above family of psh metrics on $L$ remains psh, a classical property in (complex) pluripotential theory which leads to the natural conjecture that continuity of envelopes holds as soon as $X$ is normal.

At present, continuity of envelopes has been established when $X$ is smooth, and one of the following holds: 
\begin{itemize}
	\item $X$ is a curve, as a consequence of Thuillier's work~\cite{Thui} (see~\cite{GJKM});
	\item $K$ discretely or trivially valued, of residue characteristic $0$~\cite{siminag,trivval}, building on multiplier ideals and the Nadel vanishing theorem;
	\item $K$ is discretely valued of characteristic $p$, $(X,L)$ is defined over a function field of transcendence degree $d$, and resolution of singularities is assumed in dimension $d+n$~\cite{GJKM}, replacing multiplier ideals with test ideals.
\end{itemize}

Generalizing~\cite{BGJKM}, which dealt with the discretely valued case, the main result of~\cite[Theorem A]{BE} states that any two continuous metrics $\phi,\p$ on $L$ with continuous envelope satisfy
\begin{equation}\label{equ:BEint}
\vol(L,\phi,\p)=\en(\env(\phi),\env(\p)).
\end{equation}

In the present non-Archimedean context, the relative Monge--Amp\`ere energy can be interpreted as a local height, and~\eqref{equ:BEint} as a local Hilbert--Samuel formula. Combined with Theorem A, it enables us to prove the following analogue of~\cite[Theorem B]{BB}. 

\begin{thmB} \it{Assume that continuity of envelopes holds for $(X,L)$, and let $\phi$ be a continuous metric on $L$.} 
	\begin{itemize}
		\item[(i)] The Monge--Amp\`ere measure $(dd^c\env(\phi))^n$ is supported on the contact locus $\{\env(\phi)=\phi\}$. In other words, the \emph{orthogonality property} 
		$$
		\int_{X^\an}(\phi-\env(\phi))(dd^c\env(\phi))^n=0
		$$
		is satisfied.
		\item[(ii)] For any continuous function $f$ and continous psh metric $\p$, we have 
		$$
		\frac{d}{dt}\bigg|_{t=0}\en(\env(\phi+tf),\p)=\int_{X^\an}f\,(dd^c\env(\phi))^n.
		$$
	\end{itemize}
\end{thmB} 
It is in fact essentially formal to show that (i) and (ii) are equivalent, and are also equivalent to the special case of (ii) where $\phi$ is psh, which corresponds precisely to Theorem A, thanks to~\eqref{equ:BEint}. 

\medskip

Using Theorem B and the variational argument of~\cite{BBGZ,nama}, we are %in turn 
able to produce 'fundamental solutions' to Monge--Amp\`ere equations, as follows.  

\begin{corC} \it{Assume continuity of envelopes for $(X,L)$. Let $x\in X^\an$ be a nonpluripolar point, $\phi$ a continuous metric on $L$, and assume that $x$ is $L$-regular, in the sense that the envelope}
	$$
	\phi_x\coloneqq\sup\{\text{$\p$ psh metric on $L\mid\p(x)\le\phi(x)$}\}
	$$
	is continuous (and hence psh). Then 
	$$
	V^{-1}(dd^c\phi_x)^n=\d_x,
	$$
	with $V:=(L^n)$ and $\delta_x$ the Dirac mass at $x$. 
\end{corC} 
Here again, $L$-regularity is expected to be automatic for nonpluripolar points on a normal variety. It is established in~\cite[Theorem 5.13]{nakstab} when $X$ is smooth and $K$ is trivially or discretely valued, of residue characteristic $0$.

\medskip

As a final consequence of Theorem A, we generalize the equidistribution of Fekete points in Berkovich spaces, which was established in~\cite{BE} following the variational strategy going back to~\cite{BBW} in the complex analytic case, under assumptions guaranteeing the differentiability property (ii) of Corollary B. For any basis ${\bf s}=(s_1,\dots,s_N)$ of $H^0(X,L)$, the Vandermonde (or Slater) determinant 
$
\det(s_i(x_j))_{1\le i,j\le N} 
$
can be seen as a global section $\det({\bf s})\in H^0(X^N,L^{\boxtimes N})$. Given a continuous metric $\phi$ on $L$, a \emph{Fekete configuration} for $\phi$ is a point $P\in (X^N)^\an$ achieving the supremum of $|\det({\bf s})|_{\phi^{\boxtimes N}}$, a condition that does not depend on the choice of the basis ${\bf s}$. 
By Theorem A, the differentiability property \eqref{diff property} holds for any continuous psh metric $\phi$ of $L$ and hence we get the following result as a direct application of~\cite[Theorem 10.10]{BE}.

\begin{corD} \it{Let $K$ be any non-Archimedean field, and let $L$ be an ample line bundle on a projective, geometrically reduced $K$-scheme $X$. Set $n:=\dim X$, $N_m \coloneqq h^0(X,mL)$ and $V:=(L^n)$. Pick a continuous psh metric $\phi$ on $L$, and choose for each $m \gg 1$ a Fekete configuration $P_m\in (X^{N_m})^\an$ for $m\phi$. Then 
		$P_m$ equidistributes to the 
		probability measure $V^{-1}(dd^c\phi)^n$, \ie 
		$$
		\lim_{m \to \infty}\int_{X^\an} f\,\d_{P_m} =\int_{X^\an}f\,V^{-1}(dd^c\phi)^n. 
		$$
		for each continuous function $f$ on $X^\an$ where $\d_{P_m}$ is the discrete probability measure on $X^\an$ obtained by averaging over the components of the image of $P_m$ in $(X^\an)^{N_m}$. }
\end{corD}

%
% 
%%%%%%%%%%%%%%%%%%%%%%%%%%%%%%%%%%%%%%%%%%%%%%%%%%%%%%%%%%%%%%%%%%%
%
%
\subsection*{Organization of paper}
Section 1 collects preliminary material on norms, metrics, and their relative volumes. We recall also properties of the energy and the Monge--Amp\`ere measures. 
Section 2 reviews some facts on the determinant of cohomology, and proves the key Riemann--Roch type formula.
In Section 3, we prove first Theorem A. Assuming continuity of envelopes, we  then deduce Corollary B and Corollary C.

\begin{ackn} We are  grateful to Jos\'e Burgos Gil, Antoine Ducros, Dennis Eriksson, Philipp Jell, Mattias Jonsson and Klaus K\"unnemann for many helpful discussions in relation to this work.
\end{ackn}

\addtocontents{toc}{\protect\setcounter{tocdepth}{0}}
\section*{Notation and conventions}
\addtocontents{toc}{\protect\setcounter{tocdepth}{1}}

Throughout the paper, we work over a \emph{non-Archimedean field} $K$, \ie a field complete with respect to a non-Archimedean absolute value $|\cdot|$, which might be the trivial absolute value. The corresponding valuation is denoted by $v_K\coloneqq-\log|\cdot|$. The valuation ring, maximal ideal and residue field are respectively denoted by 
$$
\Ko \coloneqq \{a \in K \mid |a|\le 1\},\quad\Koo \coloneqq  \{a \in K \mid |a|<1 \},\quad\Kt \coloneqq \Ko/\Koo.
$$ 
If $X$ is a scheme of finite type over $K$, we denote by $X^\an$ its Berkovich analytification. The space of continuous, real valued functions on $\Xan$ is denoted by $C^0(\Xan)$.

We use additive notation for line bundles and metrics. If $L,M$ are line bundles on $X$ endowed with metrics $\phi$ and $\psi$ respectively, then $L+M$ denotes the tensor product of the line bundles and $\phi+\psi$ the induced metric. The norm on $L$ associated to $\phi$ is denoted by $|\cdot|_\phi$ and $\|\cdot\|_\phi$ is the associated supnorm on $H^0(X,L)$, which is a norm if $X$ is reduced. See \S \ref{sec:metrics} for more details.

For line bundles $L_1, \dots, L_n$ on an $n$-dimensional projective scheme $X$ over a field, we use $(L_1\cdot \ldots\cdot L_n)$
for the intersection number of the first Chern classes of $L_1, \dots, L_n$. Usually, we will have $L=L_1= \dots = L_n$ and we then simply write $(L^n)$ for this intersection number, which agrees with the degree of $X$ with respect to $L$.

%%%%%%%%%%%%%%%%%%%%%%%%%%%%%%%%%%%%%%%%%%%%%%%%%%%%%%%%%%%%%%%%%%%%%%%%%%%%%%%%%%%%%%%%%%%%%%%%%%%%%%%%%%%%%%%%%%%%%%%%%%%%%%%%%%%%%%%%%%%%%%%%%%%%%%%%%%%%%%%%%%%%%%%%%%%%%%%%%%%%%%%%%%%%%%%%%%%%%%%%%%%%%%%%%%%%%%%%%%%%%%%%%%%%%%%%%%%%%%%%%%%%%%%%%%%%%%%%%%%%%%%%%%%%%%%%%%%%%%%%%%%%%%%%%%%%%%%%%%%%%%%%%%%%%%%%%%%%%%%%%%%%%%%%%%%%%%%%%%%%%%%%%%%%%%%%%%%%%%%%%%%%%%%%%%%%%%%%%%%%%%%%%%%%%%%%%%%%%%%%%%%%%%%%%%%%%%%%%%%%%%%%%%%%%%%%%%%%%%%%%%%%%%%%%%%%%%%%%%%%%%%%%%%%%%%%%%%%%%%%%%%%%%%%%%%%%%%%%%%%%%%%%%%%%%%%%%%%%%%%%%%%%%%%%%%%%%%%%%%%%%%%%%%%%%%%%%%%%%%%%

\section{Preliminaries}\label{sec:prelim}

We collect here some background results on the norms, lattices, models, Monge--Amp\`ere measures, energy and volumes. In what follows, $X$ denotes an $n$-dimensional, geometrically reduced\footnote{This simply amounts to $X$ reduced whenever $K$ is perfect.} projective $K$-scheme.
%
%%%%%%%%%%%%%%%%%%%%%%%%%%%%%%%%%%%%%%%%%%%%%%%%%%%%%%%%%
%
\subsection{Norms, lattices and content}\label{sec:norms}
Let $V$ be a finite dimensional $K$-vector space, and set $r=\dim V$. By a \emph{norm} on $V$, we always mean an ultrametric norm $\n:V\to\R_{\ge 0}$ compatible with the given absolute value of $K$. It induces a  {\it determinant norm} $\det\n$ on the determinant line $\det V= \Lambda^r V$, given by
$$
\det\| \tau \|  \coloneqq \inf_{\tau = v_1 \wedge \dots \wedge v_r} \|v_1\| \cdots \|v_r\|	
$$
for any $\tau \in \det V$. Given two norms $\n,\n'$, the {\it relative volume} of $\n$ with respect to  $\n'$ is defined as
$$
\vol(\n,\n')\coloneqq\log\left( \frac{\det\|\tau\|'}{\det\|\tau\|} \right)
$$
for any nonzero $\tau\in\det V$. For more details on the determinant norm and relative volumes, we refer to \cite[\S 2.1--2.3]{BE}.

A \emph{lattice} in $V$ is a finitely generated $\Ko$-submodule $\cV\subset V$ that span $V$ over $K$. The \emph{lattice norm} $\n_\cV$ associated to a lattice $\cV$ is given for $v \in V$ by
$$
\| v \|_\Vcal \coloneqq \min_{a \in K, \,  v \in a \Vcal}  |a|.
$$
Relative volumes of lattice norms admit the following algebraic interpretation. By \cite[Proposition 2.10 (i)]{Sch13} (see also~\cite[Lemma 2.17]{BE}), every finitely presented, torsion $\Ko$-module $M$ satisfies
$$ 
M \cong \Ko/( a_1) \oplus \ldots \oplus \Ko / ( a_r)
$$
for some nonzero $a_1,\dots,a_r\in\Koo$, where $r$ and the sequence $v(a_i)$ are further uniquely determined by $M$, up to reordering. The {\it content}\footnote{This quantity was called length in~\cite{Sch13}, and corresponds to $-\log$ of the content as defined in~\cite{Tem}.} of $M$ is defined as
$$
\length(M) = \sum_{i=1}^r v_K(a_i) \in \R_{\geq 0}.
$$
When $K$ is discretely valued with uniformizer $\pi\in\Koo$, then $\length(M)$ is the usual length of $M$, multiplied by $v_K(\pi)$~\cite[Example 2.19]{BE}. 

Now every finitely presented torsion $\Ko$-module $M$ arises as a quotient $M=\Vcal/\Vcal'$ for lattices $\Vcal'\subset\cV$ in a finite dimensional $K$-vector space, and  
\begin{equation}\label{equ:contvol}
\length(M)=\vol(\n_{\cV},\n_{\cV'}).
\end{equation}

%
%%%%%%%%%%%%%%%%%%%%%%%%%%%%%%%%%%%%%%%%%%%%%%%%%%%%%%%%%
%
\subsection{Metrics}\label{sec:metrics}
As in \cite[\S 5]{BE}, we use additive notation for metrics on a line bundle $L$ over $X$. Then a {\it metric} $\phi$ on $L$ is a family of functions $\phi_x:L \otimes_X \Hcal(x) \to \R \cup \{\infty\}$ such that $\abs_{\phi_x}  \coloneqq e^{-\phi_x}$ is a norm on the $1$-dimensional $\Hcal(x)$-vector space $L \otimes_X \Hcal(x)$ for every $x \in \Xan$. 
Here, $\Hcal(x)$ is the completed residue field of $x$ 
endowed with its canonical absolute value \cite[Remark 1.2.2]{berkovich-book}. We usually skip the $x$ and write simply $\abs_{\phi}$ for the norms. Note that $L \otimes_X \Hcal(x)$ is the non-Archimedean analogue of the fiber of a holomorphic line bundle.

Given two metrics $\phi$, $\psi$ on line bundles $L$, $M$ over $X$, we denote by $\phi\pm\p$  the induced metric on $L\pm M=L\otimes M^{\pm 1}$. The corresponding norms thus satisfy $|\cdot|_{\phi\pm\psi}= |\cdot|_\phi \otimes |\cdot|_\psi^{\pm 1}$.

A metric $\phi$ on $L$ is called {\it continuous} if the function $x \mapsto |t(x)|_\phi$, induced by any local section $t$ of $L$, is continuous with respect to the Berkovich topology.
For $s\in H^0(X,L)$, 	the associated  {\it supremum norm}  is denoted by 
$$
\|s\|_\phi \coloneqq \sup_{x \in X^\an}|s(x)|_\phi.
$$

%%%%%%%%W%%%%%%%%%%%%%%%%%%%%%%%%%%%%%%%%%%%%%%%%%%%%%%%%%
%
\subsection{Models}\label{sec:models}
In this paper, a {\it model} $\Xcal$ of $X$ is a flat projective $K^\circ$-scheme, together with an identification of the generic fiber $\Xcal_\eta$ of $\Xcal\to\Spec(K^\circ)$ with $X$. There is a canonical {\it reduction map} $\redu_\Xcal:\Xan \to \Xcal_s$ to the special fiber $\Xcal_s$ of $\Xcal$  (see \cite[Remark 2.3]{GM} and \cite[\S 2]{gubler-rabinoff-werner2} for details).

We say that a model $\Xcal$ of $X$ is {\it dominated} by another model $\Xcal'$ if the identity on $X$ extends to a (unique) morphism $\Xcal' \to \Xcal$ over $\Ko$. This induces a partial order on the set of models of $X$ modulo isomorphim, which turns it into a directed system. 

If $K$ is algebraically closed and nontrivially valued, then it follows from the reduced fiber theorem (see for instance~\cite[Theorem 4.20]{BE}) that models $\Xcal$ with reduced special fiber $\Xcal_s$ are cofinal among all models. On the other hand, in the trivially valued case, $X$ is its only model, up to isomorphism.  

Now let $L$ be a line bundle on $X$. A {\it model $(\Xcal,\Lcal)$ of $(X,L)$} consists of a model $\Xcal$ of $X$ and a line bundle $\Lcal$ on $\Xcal$ together with an identification  $\Lcal|_{\Xcal_\eta} \simeq L$ compatible with the identification $\Xcal_\eta \simeq X$. We then say that $\Lcal$ is a {\it model of $L$ determined on $\Xcal$}. Every model of the trivial line bundle $L=\cO_X$ determined on a model $\Xcal$ is of the form $\Lcal=\cO_\Xcal(D)$, where $D$ is a Cartier divisor which is \emph{vertical}, \ie supported in the special fiber. 

\begin{lem}\label{lem:ample} Assume $K$ is algebraically closed and non-trivially valued, and let $(L_i)$ be a finite collection of ample line bundles on $X$. Then models $\Xcal$ of $X$ that have reduced special fiber and such that all $L_i$ extend to an ample $\Q$-line bundles on $\Xcal$ are cofinal in the set of all models. 
\end{lem}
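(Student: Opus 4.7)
The plan is to combine the reduced fiber theorem recalled just before the statement with an explicit construction via very ample embeddings. Since that theorem guarantees that models with reduced special fiber are cofinal, and since ampleness of a $\Q$-line bundle is an open condition preserved by small perturbations, it suffices to prove the following: for every model $\Xcal_0$ of $X$, there is a dominating projective $\Ko$-model $\Xcal$ on which each $L_i$ extends to an ample $\Q$-line bundle; a final application of the reduced fiber theorem, combined with a small perturbation, then yields the stated cofinality.

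To construct such an $\Xcal$, I fix $m \geq 1$ large enough so that (i) each $mL_i$ is very ample on $X$, giving closed embeddings $\iota_i \colon X \hookrightarrow \Pbb^{N_i}_K$, and (ii) some ample line bundle on $\Xcal_0$ is very ample at level $m$, giving a closed embedding $\iota_0 \colon \Xcal_0 \hookrightarrow \Pbb^{N_0}_{\Ko}$ (which exists because $\Xcal_0$ is projective over $\Ko$). Pairing these data yields a closed immersion of $X$ into the generic fiber of the product $\prod_{j=0}^{r} \Pbb^{N_j}_{\Ko}$, and I take $\Xcal$ to be the scheme-theoretic closure of $X$ in that product. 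Then $\Xcal$ is a flat projective $\Ko$-model of $X$; the projection to the $0$-th factor factors through $\iota_0$ by the universal property of scheme-theoretic closure, so $\Xcal$ dominates $\Xcal_0$, and the pullbacks $H_i := p_i^* \cO_{\Pbb^{N_i}}(1)$ to $\Xcal$ extend $mL_i$ for $i = 1, \ldots, r$.

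The main obstacle is that each $H_i$ is only nef on $\Xcal$, being pulled back along the projection $p_i$. The key observation for handling this is that for any positive integers $a_0, \ldots, a_r$, the line bundle $\sum_j a_j H_j$ on $\Xcal$ is the restriction of the ample line bundle $\cO(a_0, \ldots, a_r)$ on the ambient product, hence is itself ample. To upgrade each $\frac{1}{m} H_i$ to an ample $\Q$-line bundle extension of $L_i$, I would pass to a further admissible blow-up $\Xcal' \to \Xcal$ and correct by a vertical $\Q$-Cartier divisor $D_i$: the existence of $D_i$ such that $\frac{1}{m} H_i + D_i$ is ample on $\Xcal'$ and restricts to $L_i$ on the generic fiber follows from the flexibility of vertical extensions afforded by Raynaud's theory of admissible blow-ups, combined with the openness of the ample cone. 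A final application of the reduced fiber theorem then delivers the sought cofinal family, and I expect this last perturbation-to-ample step to be the principal technical hurdle, since the nef class $\frac{1}{m}H_i$ typically lies on the boundary of the nef cone of $\Xcal$ and one must find a blow-up along a suitably chosen vertical ideal together with a vertical correction that simultaneously pushes the class into the interior while leaving its restriction to $X$ unchanged.
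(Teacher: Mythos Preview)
Your construction of $\Xcal$ as a closure in a product of projective spaces is fine and does yield nef extensions $H_i$ of $mL_i$, but the proposal breaks down at the step you yourself flag as ``the principal technical hurdle.'' The assertion that Raynaud's theory of admissible blow-ups together with openness of the ample cone produces a vertical $\Q$-Cartier divisor $D_i$ on some blow-up $\Xcal'$ with $\frac{1}{m}H_i + D_i$ ample is not a proof; it is precisely the content you need to establish. Openness of the ample cone tells you that ample $+$ small arbitrary perturbation stays ample, not that a given boundary nef class can be pushed into the interior using only \emph{vertical} classes, and admissible blow-ups do not by themselves supply such a vertical correction. This step is in fact what \cite[Proposition 4.11, Lemma 4.12]{GM} proves, and the paper simply invokes that reference rather than reproving it.

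There is a second, more structural issue. Even granting a model $\Xcal'$ on which each $L_i$ has an ample $\Q$-extension, your plan to then ``apply the reduced fiber theorem, combined with a small perturbation'' repeats the same hand-wave: passing to a dominating model can destroy ampleness, and you have not explained what perturbation restores it. The paper's argument is sharper here: the reduced-fiber model is obtained as the \emph{integral closure} of $\Xcal'$ in its generic fiber, and the resulting morphism $\mu:\Xcal''\to\Xcal'$ is \emph{finite}. Since pullback along a finite morphism preserves ampleness, $\mu^*\Lcal'_i$ is automatically ample, and no perturbation is needed. Recognizing and using this finiteness is the missing idea in your outline.
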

\begin{proof} By  \cite[Proposition 4.11, Lemma 4.12]{GM}, every model $\Xcal$ of $X$ is dominated by a model $\Xcal'$ on which all $L_i$ extend to ample $\Q$-line bundles $\cL'_i$. By~\cite[Theorem 4.20]{BE}, the integral closure of $\Xcal'$ in its generic fiber $\Xcal_\eta\simeq X$ is a model $\Xcal''$ with reduced special fiber, which dominates $\Xcal'$ via a finite morphism $\mu:\Xcal''\to\Xcal'$. As a result, $\mu^*\Lcal'_i$ is an ample $\Q$-line bundle extending $L_i$, and we are done.  
\end{proof}

If $(\Xcal,\Lcal)$ is a model of $(X,L)$, then $H^0(\Xcal,\Lcal)$ is a lattice in $H^0(X,L)$. Indeed, it follows from the direct image theorem given in \cite[Theorem 3.5]{Ull} that $H^0(\Xcal,\Lcal)$ is a finitely generated $\Ko$-module, while flat base change implies $H^0(\Xcal,\Lcal)\otimes_\Ko K\simeq H^0(X,L)$. 

Recall that a section $s$ of a line bundle over a scheme $Z$ is \emph{regular} if its zero subscheme is a Cartier divisor, \ie if the corresponding function in any local trivialization of the line bundle is a nonzerodivisor. The section $s$ is \emph{relatively regular} with respect to a flat morphism $Z\to S$ if its zero subscheme is a Cartier divisor and is flat over $S$. 

Given a model $(\Xcal,\Lcal)$ of $(X,L)$, it follows from~\cite[11.3.7]{EGAIV4} that a section $s\in H^0(\Xcal,\Lcal)$ is relatively regular (with respect to the structure morphism $\cX\to\Spec K^\circ$) if and only if its restriction to $\Xcal_s$ is regular. By~\cite[Proposition A.15]{BE}, if $\Lcal$ is ample then $H^0(\Xcal,m\Lcal)$ admits relatively regular sections for all $m\gg 1$. For later use, we note: 

\begin{lemma}\label{lem:relreg}
	Let $(\Xcal,\Lcal)$ be a model of $(X,L)$, and $D$ be an effective vertical Cartier divisor. If $s\in H^0(\Xcal,\Lcal)$ is a relatively regular section, then $s|_D$ is regular on $D$.
\end{lemma}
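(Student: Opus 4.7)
My plan is to reduce the statement to a local computation in the stalks, then exploit three things: (i) the characterization of relative regularity in terms of flatness of the quotient, (ii) the fact that verticality of $D$ forces a local equation for $D$ to become a unit after inverting nonzero elements of $K^\circ$, and (iii) the torsion-freeness of $A/(f)$ over $K^\circ$ that this flatness gives.

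Concretely, fix a point $x\in D$ and trivialize $\Lcal$ near $x$, so that $s$ becomes an element $f$ of $A\coloneqq\cO_{\Xcal,x}$. Relative regularity of $s$ means $f$ is a nonzerodivisor in $A$ and $A/(f)$ is flat over $K^\circ$. Let $g\in A$ be a local equation for $D$, which is a nonzerodivisor by definition of Cartier divisor. I want to show that $f$ is a nonzerodivisor in $A/(g)=\cO_{D,x}$; doing this at every $x\in D$ is equivalent to $s|_D$ being regular on $D$.

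The first key observation is that, because $D$ is vertical, $g$ becomes a unit in $A\otimes_{K^\circ}K$. Combined with flatness (hence torsion-freeness) of $A$ over $K^\circ$ (here I may also pass to the reduced special fiber model provided by Lemma~\ref{lem:ample} or simply use flatness of $\cX\to\Spec K^\circ$ directly), this produces some $g'\in A$ and some nonzero $\pi\in K^\circ$ with $gg'=\pi$ in $A$. The second observation is that flatness of $A/(f)$ over the valuation ring $K^\circ$ is equivalent, by~\cite[Proposition 2.10]{Sch13} or the standard characterization of flat modules over a valuation ring, to $A/(f)$ being torsion-free: multiplication by any nonzero $\pi\in K^\circ$ is injective on $A/(f)$.

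Now the argument is short. Suppose $fh\equiv 0\pmod{g}$, i.e.\ $fh=gk$ for some $k\in A$. Multiplying by $g'$ gives $f(hg')=\pi k$, so $\pi k$ lies in $(f)$; by torsion-freeness of $A/(f)$ this forces $k\in(f)$, say $k=fk'$. Substituting back, $fh=gfk'$, and since $f$ is a nonzerodivisor in $A$ we conclude $h=gk'\in(g)$, as desired. The only mild obstacle I anticipate is being careful that $A$ itself is $K^\circ$-torsion-free so that the identity $gg'=\pi$ obtained in $A\otimes_{K^\circ}K$ actually lifts to $A$; this is supplied by the flatness of $\cX$ over $K^\circ$ built into our definition of a model.
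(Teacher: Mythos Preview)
Your proof is correct and follows essentially the same route as the paper's own argument: after localizing, both use that $A/(f)$ is torsion-free over $K^\circ$ (equivalently, $A/(f)\hookrightarrow A/(f)\otimes_{K^\circ}K$) together with the invertibility of the local equation for $D$ after tensoring with $K$ to conclude that the auxiliary factor lies in $(f)$, and then cancel $f$. The only cosmetic difference is that you explicitly produce $g'$ and $\pi$ with $gg'=\pi$, whereas the paper argues directly in $A/(f)\otimes_{K^\circ}K$; the aside about Lemma~\ref{lem:ample} is unnecessary but harmless.
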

\begin{proof} The statement is local, and thus reduces to the following. Let $\Acal$ be a flat, finite type $\Ko$-algebra, $f\in\Acal$ a relatively regular function, and $a\in\Acal$ a nonzerodivisor whose image in $\Acal\otimes_\Ko K$ is invertible. We have to show that the image of $f$ in $\Acal/(a)$ is a nonzerodivisor. To see this, pick $g,h\in\Acal$ such that $fg=ah$. We then need to prove that $g\in(a)$. Since $f$ is relatively regular, $\Acal/(f)$ is flat over $\Ko$, and the map $\Acal/(f) \to\Acal/(f) \otimes_\Ko K$ is thus injective. The image of $a$ in $\cA/(f)\otimes_\Ko K$ being invertible, the image of $h$ in $\Acal/(f)\hookrightarrow\cA/(f)\otimes_\Ko K$ is zero, and hence $h\in(f)$, \ie $h=h'f$ for some $h' \in \cA$. Then $fg=ah'f$, and hence $g=ah'\in (a)$ as $f$ is a nonzerodivisor. 
\end{proof}

%
%%%%%%%%%%%%%%%%%%%%%%%%%%%%%%%%%%%%%%%%%%%%%%%%%%%%%%%%%
%
\subsection{Model metrics}
Let $L$ be a line bundle on $X$. To every model $(\Xcal,\Lcal)$ of $(X,L)$ is associated a continuous metric $\phi_\Lcal$ on $L$, determined as follows:  every point of $\Xan$ belongs to the affinoid domain $\redu_{\Xcal}^{-1}(\cU)$ induced by an affine open subset $\cU$ of $\Xcal$ on which $\Lcal$ admits a trivializing section $\tau$, and $\phi_\Lcal$ is determined by requiring that $|\tau|_{\phi_\Lcal}\equiv 1$ on $\redu_{\Xcal}^{-1}(\cU)$. This construction is invariant under pull-back to a higher model, \ie $\phi_{\mu^*\Lcal}=\phi_\Lcal$ for any morphism of models $\mu:\Xcal'\to\Xcal$. We refer to \cite[5.3]{BE} and \cite[\S 2]{GM} for more details. 

A \emph{model metric} on $L$ is defined as a continuous metric of the form $\phi=m^{-1}\phi_\Lcal$ where $\Lcal$ is a model of $mL$ for some nonzero $m \in \N$. We  say that $\phi$ is determined by the $\Q$-model $m^{-1}\Lcal$. 

A \emph{model function} is a continuous function on $X^\an$ corresponding to a model metric on the trivial line bundle $\cO_X$. It is thus determined by a vertical $\Q$-Cartier divisor $D$ on some model $\Xcal$ of $X$, and we write $\phi_D$ for the corresponding model function. Model functions form a $\Q$-vector space of continuous functions, which is stable under max. When $K$ is non-trivially valued, model functions further separate points, and hence are dense in $C^0(X^\an)$ by the Stone-Weierstrass theorem, see~\cite[Theorem 7.12]{gubler-crelle}. 

The next result explains the importance of models with reduced special fiber in our approach. 

\begin{lemma}\label{lem:intclosnorm} Let $\Xcal$ be a model of $X$ with reduced special fiber. 
	\begin{itemize}
		\item[(i)] If $\Lcal$ is a model of $L$ determined on $\Xcal$, then the supnorm $\n_{\phi_\Lcal}$ coincides with the lattice norm $\n_{H^0(\Xcal,\Lcal)}$.
		\item[(ii)] If $D$ is a vertical Cartier divisor on $\Xcal$, then $D$ is effective if and only if $\phi_D\ge 0$. 
	\end{itemize}
\end{lemma}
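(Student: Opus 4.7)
Part (ii) will follow formally from part (i), so the main work lies in (i).

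For (i), the inequality $\|\cdot\|_{\phi_\Lcal}\le\|\cdot\|_{H^0(\Xcal,\Lcal)}$ is straightforward and requires no reducedness: given $s\in H^0(\Xcal,\Lcal)$, on any trivializing affine open $\cU\subset\Xcal$ with trivialization $\tau$ one writes $s=f\tau$ with $f\in\cO_\Xcal(\cU)$, so $|s(x)|_{\phi_\Lcal}=|f(x)|\le 1$ for $x\in\redu_\Xcal^{-1}(\cU)$ by definition of the reduction map; covering $X^\an$ by finitely many such preimages yields $\|s\|_{\phi_\Lcal}\le 1$, and the general inequality follows by scaling. The reverse inequality is where reducedness of $\Xcal_s$ enters, through the classical characterization
$$
\cO_\Xcal(\cU)=\{f\in\cO_X(\cU_\eta):\sup_{x\in\redu_\Xcal^{-1}(\cU)}|f(x)|\le 1\}
$$
for any affine open $\cU\subset\Xcal$ in a model with reduced special fiber. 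Granted this, if $\|s\|_{\phi_\Lcal}\le 1$, then on each trivializing affine open we locally write $s=f\tau$ with $|f|\le 1$ on $\redu_\Xcal^{-1}(\cU)$, hence $f\in\cO_\Xcal(\cU)$ by the characterization; gluing gives $s\in H^0(\Xcal,\Lcal)$. Rescaling (and observing that the trivially valued case is tautological, since then $\Xcal=X$ and $H^0(\Xcal,\Lcal)=H^0(X,L)$) yields $\|s\|_{H^0(\Xcal,\Lcal)}\le\|s\|_{\phi_\Lcal}$.

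For (ii), apply (i) to the model $\Lcal=\cO_\Xcal(D)$ of $\cO_X$, noting $\phi_{\cO_\Xcal(D)}=\phi_D$. The canonical meromorphic section $1_D$ of $\cO_\Xcal(D)$, whose divisor equals $D$, corresponds to the constant section $1$ under the generic fiber identification $\cO_\Xcal(D)|_{\Xcal_\eta}\simeq\cO_X$. Then $D$ is effective iff $1_D$ is regular, iff $1_D\in H^0(\Xcal,\cO_\Xcal(D))$, iff $\|1_D\|_{\phi_D}\le 1$ by (i); and since $\|1_D\|_{\phi_D}=\sup_{x\in X^\an}e^{-\phi_D(x)}$, this last condition is precisely $\phi_D\ge 0$ on $X^\an$.

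The main obstacle is establishing the affine-open characterization of $\cO_\Xcal(\cU)$ as the ring of power-bounded functions on $\redu_\Xcal^{-1}(\cU)$ under the reduced fiber hypothesis; this is a standard consequence of the reduced fiber theorem of Bosch--L\"utkebohmert--Raynaud, already implicit in the use of \cite[Theorem 4.20]{BE} earlier in the paper. Everything else is formal.
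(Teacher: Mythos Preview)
Your proposal is correct and follows the same approach as the paper: the paper simply cites \cite[Lemma 6.3]{BE} for (i), and your outline (power-bounded characterization of $\cO_\Xcal(\cU)$ under the reduced special fiber hypothesis, plus scaling) is precisely the standard proof of that lemma; your argument for (ii) via the canonical meromorphic section $1_D$ and part (i) is essentially identical to the paper's. One small remark: the ``rescaling'' step in (i) tacitly uses that $\|\cdot\|_{\phi_\Lcal}$ takes values in $|K|$ (automatic in the densely or trivially valued cases, and in the discretely valued case a consequence of the fact that the Shilov points of a model with reduced special fiber have value group $|K^\times|$), but this is a routine point and not a gap.
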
 

\begin{proof} Property (i) is~\cite[Lemma 6.3]{BE}. For (ii), note that the vertical Cartier divisor $D$ induces a canonical meromorphic section $s_D$ of $\Lcal=\Ocal(D)$ which restricts to a global section of $L=\Lcal|_X$. By definition  of a lattice norm, we have $s_D \in H^0(\Xcal,\Lcal)$ if and only if $\|s_D\|_{H^0(\Xcal,\Lcal)} \leq 1$ and hence (ii) follows from (i).
\end{proof}

%
%%%%%%%%%%%%%%%%%%%%%%%%%%%%%%%%%%%%%%%%%%%%%%%%%%%%%%%%%
%

\subsection{Plurisubharmonic metrics and envelopes}\label{sec:psh}
In this subsection, we recall some facts about plurisubharmonic metrics on an ample line bundle $L$ over $X$. We refer to~\cite[\S 7]{BE} for a thorough discussion. 

Assume first that $K$ is non-trivially valued. Following Shou-Wu Zhang~\cite{zhang-95}, we then say that a continuous metric $\phi$ on $L$ is \emph{plurisubharmonic} (\emph{psh} for short) if $\phi$ can be written as a uniform limit of model metrics $\phi_{\Lcal_i}$ associated to nef $\Q$-models $\Lcal_i$ of $L$. By~\cite[Theorem 7.8]{BE}, this definition is compatible with the point of view of~\cite{BE,trivval}, which defines continuous psh metrics as uniform limits of Fubini--Study metrics. 

When $K$ is trivially valued, a continuous metric $\phi$ on $L$ is called \emph{psh} if there exists a non-trivially valued non-Archimedean field extension $F$ of $K$ such that the induced continuous metric $\phi_F$ on the base change $L\otimes_KF$ is psh in the above sense. By~\cite[Theorem 7.32]{BE}, this condition is independent of the choice of $F$, and compatible with the Fubini--Study approach of~\cite{BE,trivval}. 

\begin{defi} We say that \emph{continuity of envelopes} holds for $(X,L)$ if, for any continuous metric $\phi$ on $L$, the \emph{psh envelope} 
	$$
	\env(\phi)\coloneqq\sup\{\p\text{ continuous psh metric on }L\mid\p\le\phi\}
	$$
	is a continuous metric on $L$ as well. 
\end{defi}
When this holds, $\env(\phi)$ is automatically psh, and is thus characterized as the greatest continuous psh metric dominated by $\phi$. In the complex analytic case, continuity of envelopes holds over any normal complex space, and fails in general otherwise. By analogy, we conjecture that continuity of envelopes holds as soon as $X$ is normal. As recalled in the introduction, it is at present known when $X$ is smooth and one of the following is satisfied:
\begin{itemize}
	\item $X$ is a curve, as a consequence of A.~Thuillier's work~\cite{Thui} (see~\cite{GJKM});
	\item $K$ discretely or trivially valued, of residue characteristic $0$~\cite{siminag,trivval}, building on multiplier ideals and the Nadel vanishing theorem;
	\item $K$ is discretely valued of characteristic $p$, $(X,L)$ is defined over a function field of transcendence degree $d$, and resolution of singularities is assumed in dimension $d+n$~\cite{GJKM}, replacing multiplier ideals with test ideals.
\end{itemize}
%%%%%%%%%%%%%%%%%%%%%%%%%%%%%%%%%%%%%%%%%%%%%%%%%%%%%%%%%
%
\subsection{Monge--Amp\`ere measures and energy}\label{subsection MA measures}
A construction of A.~Chambert-Loir associates to any $n$-tuple $\phi_1,\dots,\phi_n$ of continuous psh metrics  on ample line bundles $L_1, \dots, L_n$ over $X$ their \emph{mixed Monge--Amp\`ere measure}
$$
dd^c\phi_1\wedge\dots\wedge dd^c\phi_n,
$$
a positive Radon measure on $X^\an$ of total mass equal to the intersection number $(L_1\cdot\ldots\cdot L_n)$. This measure depends multilinearly and continuously on the tuple $(\phi_1,\dots,\phi_n)$ with respect to uniform convergence (and weak convergence of measures), and the construction is further compatible with ground field extension.

These measures were first constructed in~\cite{CL} over non-Archimedean fields $K$ with a countable dense subset. Over an arbitrary non-Archimedean ground field, the measures can be obtained by base change to a non-trivially valued algebraically closed non-Archimedean field $F$, using~\cite[\S 2]{gubler-2007b}. One can also directly rely on the local approach in~\cite{CL}, see~\cite[\S 8.1]{BE} for details.

\begin{example} \label{model example for MA measure} For psh model metrics $\phi_1,\dots,\phi_n$, the measure  $dd^c\phi_1\wedge\dots\wedge dd^c\phi_n$ has finite support. When $K$ is algebraically closed, the $\phi_i$ are determined by nef $\Q$-models $\Lcal_1,\dots,\Lcal_n$ of $L$ determined on a model $\Xcal$ that can be chosen to have reduced special fiber $\Xcal_s$; each irreducible component $Y$ of $\Xcal_s$ then determines a unique point $x_Y\in X^\an$ with $\redu_\Xcal(x_Y)$ the generic point of $Y$, and we have 
	$$
	dd^c \phi_1 \wedge \dots \wedge dd^c \phi_n = \sum_Y (\Lcal_1|_Y\cdots\Lcal_n|_Y)\delta_{x_Y}, 
	$$
	where $\delta_{x_Y}$ is the Dirac measure at $x_Y$, see~\cite[Corollary 2.8]{gubler-2007b},~\cite[Th\'eor\`eme 6.9.3]{ChaDuc}.
\end{example}

From now on we fix an ample line bundle $L$ on $X$, and denote by $V\coloneqq(L^n)$ its volume. The \emph{relative Monge--Amp\`ere energy} of $\phi,\p$ is defined as\footnote{We emphasize that the present normalization is not uniform across the literature.} 
\begin{equation}\label{equ:en}
\en(\phi,\p):=\frac{1}{n+1}\sum_{j=0}^n\int_{X^\an} (\phi-\p)(dd^c\phi)^j\wedge(dd^c\p)^{n-j}.
\end{equation}
For each $\p$, the functional $\phi\mapsto\en(\phi,\p)$ is characterized as the unique antiderivative of the Monge--Amp\`ere operator $\phi\mapsto (dd^c\phi)^n$ that vanishes at $\p$, in the sense that
\begin{equation}\label{equ:endiff}
\frac{d}{dt}\bigg|_{t=0}\en((1-t)\phi+t\phi',\p)=\int_{X^\an}(\phi'-\phi)(dd^c\phi)^n
\end{equation}
for any two continuous psh metrics $\phi,\phi'$. As a consequence, the \emph{cocycle property}
$$
\en(\phi_1,\phi_2)+\en(\phi_2,\phi_3)+\en(\phi_3,\phi_1)=0
$$
holds for all triples of continuous psh metrics $\phi_1,\phi_2,\phi_3$ on $L$. 

Another key property of the Monge--Amp\`ere energy is the concavity of $\phi\mapsto\en(\phi,\p)$. In view of~\eqref{equ:endiff} and the cocyle property, this amounts to
\begin{equation}\label{equ:Econc}
\en(\phi,\p)\le\int_{X^\an}(\phi-\p)(dd^c\p)^n
\end{equation}
for all continuous psh metrics $\phi,\p$ on $L$. Moreover, 
$$
\en(\phi+c)=\en(\phi)+Vc
$$
for all $c\in\R$. We refer to~\cite[\S 3.8]{trivval} for details on the above properties. 
%
%%%%%%%%%%%%%%%%%%%%%%%%%%%%%%%%%%%%%%%%%%%%%%%%%%%%%%%%%
%
\subsection{Relative volumes of metrics}\label{subsection comparison volumes}
Recall that the \emph{volume} of a line bundle $L$ on $X$ is defined as 
$$
\vol(L):=\lim_{m\to\infty}\frac{n!}{m^n}\dim H^0(X,mL)\in\R_{\ge 0}
$$
(see for instance~\cite[Theorem 9.8]{BE} for the existence of the limit in the present generality). We have $\vol(L)>0$ if and only if $L$ is big, and $\vol(L)=(L^n)$ whenever $L$ is nef. 

The \emph{relative volume} of two continuous metrics $\phi,\p$ on $L$ is
$$
\vol(L,\phi,\p)\coloneqq\lim_{m\to\infty}\frac{n!}{m^{n+1}}\vol(\n_{m\phi},\n_{m\p})\in\R. 
$$
The existence of this limit was established is~\cite[Theorem 9.8]{BE}, building on the work of Chen and Maclean~\cite{CMac}.

\begin{prop}\label{prop:volmetr} The following properties hold for all continuous metrics on a given line bundle $L$: 
	\begin{itemize}
		\item[(i)] \emph{cocycle formula:} $\vol(L,\phi_1,\phi_2)+\vol(L,\phi_2,\phi_3)+\vol(L,\phi_3,\phi_1)=0$; 
		\item[(ii)] \emph{monotonicity:} $\phi\le\phi'\Longrightarrow\vol(L,\phi,\p)\le\vol(L,\phi',\p)$; 
		\item[(iii)] \emph{scaling:} $\vol(L,\phi+c,\p)=\vol(L,\phi,\p)+\vol(L)c$ for $c\in\R$; 
		\item[(iv)] \emph{Lipschitz continuity:}
		$$
		\left|\vol(L,\phi,\p)-\vol(L,\phi',\p')\right|\le\vol(L)\left(\sup_{x \in \Xan}|\phi(x)-\phi'(x)|+\sup_{x \in \Xan}|\p(x)-\p'(x)|\right). 
		$$
		\item[(v)] \emph{homogeneity}: $\vol(aL,a\phi,a\p)=a^{n+1}\vol(L,\phi,\p)$ for all $a\in\N$. 
		\item[(vi)] \emph{base change invariance}: for any non-Archimedean extension $F/K$, we have
		$$\vol(L_F,\phi_F,\p_F)=\vol(L,\phi,\p),$$
		with $\phi_F,\p_F$ denoting the pullbacks of $\phi,\p$ to the base change $L_F=L \otimes_K F$. 
		
	\end{itemize}
\end{prop}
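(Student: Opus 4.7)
My strategy is to deduce every property from the corresponding statement at finite level, i.e.\ for the relative volumes of the supnorms $\n_{m\phi}$ on $H^0(X,mL)$, and then pass to the limit $m\to\infty$. Setting $N_m\coloneqq\dim H^0(X,mL)$, one has $N_m=\vol(L)\,m^n/n!+o(m^n)$ by definition of the volume, and the existence of the defining limit in each case is guaranteed by~\cite[Theorem 9.8]{BE}. Most of the finite-level ingredients I need are recorded in~\cite[\S 2]{BE}.

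The cocycle (i) is immediate because $\vol(\n,\n')=\log\det\|\tau\|'-\log\det\|\tau\|$ is antisymmetric and additive, so the cocycle identity holds at each level $m$ and passes to the limit. For scaling (iii), $\n_{m(\phi+c)}=e^{-mc}\n_{m\phi}$ yields $\det\n_{m(\phi+c)}=e^{-mcN_m}\det\n_{m\phi}$, hence $\vol(\n_{m(\phi+c)},\n_{m\p})=\vol(\n_{m\phi},\n_{m\p})+mcN_m$; dividing by $m^{n+1}/n!$ and using the asymptotic for $N_m$ produces the correction $\vol(L)\,c$. For monotonicity (ii), $\phi\le\phi'$ gives $\n_{m\phi}\ge\n_{m\phi'}$ pointwise on sections, hence $\det\n_{m\phi}\ge\det\n_{m\phi'}$ by monotonicity of the determinant norm, and rearranging shows $\vol(\n_{m\phi},\n_{m\p})\le\vol(\n_{m\phi'},\n_{m\p})$, which passes to the limit. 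For Lipschitz continuity (iv), the basic estimate $\sup|\phi-\phi'|\le\e$ gives $e^{-m\e}\n_{m\phi}\le\n_{m\phi'}\le e^{m\e}\n_{m\phi}$, whence $|\vol(\n_{m\phi},\n_{m\p})-\vol(\n_{m\phi'},\n_{m\p})|\le m\e N_m$; applying this separately to the pairs $(\phi,\phi')$ and $(\p,\p')$ and combining via the cocycle produces the claim in the limit. The homogeneity (v) is a direct reindexing $k=am$ in the defining limit, with the factor $a^{n+1}$ coming from the normalization $m^{n+1}$.

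The main obstacle is base change invariance (vi). By flat base change one has $H^0(X_F,mL_F)=H^0(X,mL)\otimes_K F$, and the key point is that the supnorm $\n_{m\phi_F}$ on the left coincides, under this identification, with the canonical extension of $\n_{m\phi}$ as an ultrametric norm on the scalar extension. This uses geometric reducedness of $X$ (so that $X_F$ remains reduced and the supnorm is an honest norm), together with the surjectivity of $(X_F)^\an\to X^\an$ and the fact that $|s|_{m\phi}(x)$ is recovered from any lift $y\in (X_F)^\an$ via the embedding $\Hcal(x)\hookrightarrow\Hcal(y)$; compatibility of Berkovich supnorms with base change in this setting is recorded in~\cite[\S 5]{BE}. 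Once this is granted, the base-change invariance of relative volumes of norms at finite level (an elementary property of determinant norms, cf.~\cite[\S 2]{BE}) combined with the invariance of $N_m$ under flat base change yields $\vol(\n_{m\phi_F},\n_{m\p_F})=\vol(\n_{m\phi},\n_{m\p})$ for each $m$, and (vi) follows by passing to the limit.
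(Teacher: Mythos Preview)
The paper does not supply its own proof of this proposition; it is stated as a list of established facts, with the underlying theory deferred to~\cite{BE} (notably \S 2, \S 5, and Theorem~9.8 there). Your argument is a correct and natural fleshing-out of these references: each of (i)--(iv) follows at finite level $m$ from elementary properties of determinant norms together with the asymptotic $N_m\sim\vol(L)\,m^n/n!$, and (vi) is exactly the base-change compatibility of supnorms and of determinant-norm volumes recorded in~\cite{BE}. One small point to make explicit in (v): the reindexing $k=am$ only runs over multiples of $a$, so you are computing a subsequential limit; you need the existence of the full limit (which you already cite from~\cite[Theorem~9.8]{BE}) to conclude it equals $\vol(L,\phi,\p)$.
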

In particular, if $L$ is not big, \ie $\vol(L)=0$, then $\vol(L,\phi,\p)=0$ for all continuous metrics on $L$, by (iv). The next result, which equates relative volume and relative energy, goes back to~\cite{BB} in the complex analytic case. In the non-Archimedean context, the result was established in~\cite{BGJKM} in the discretely valued case, and in~\cite[Corollary B]{BE} in the general case. 

\begin{thm}\label{thm:BE} If $L$ is an ample line bundle and $\phi,\p$ are psh metrics on $L$ with continuous psh envelopes $\env(\phi),\env(\p)$, then 
	$$
	\vol(L,\phi,\p)=\en(\env(\phi),\env(\p)). 
	$$
\end{thm}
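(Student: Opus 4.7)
The plan is to reduce the statement to the case where $\phi$ and $\p$ are themselves continuous psh, and then to compare both sides along the affine path between them using the cocycle property together with Theorem~A.

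For the reduction, using the cocycle formula in Proposition~\ref{prop:volmetr}(i) it suffices to establish $\vol(L,\phi,\env(\phi))=0$ for any continuous $\phi$ with continuous psh envelope. This follows from the fact that the supnorm on $H^0(X,mL)$ depends only on the envelope, in the sense that $\|s\|_{m\phi}=\|s\|_{m\env(\phi)}$ for every $m\ge 1$ and $s\in H^0(X,mL)$. The inequality $\|s\|_{m\env(\phi)}\ge\|s\|_{m\phi}$ is immediate from $\env(\phi)\le\phi$, and the reverse direction is a consequence of the Fubini--Study description of psh envelopes recalled in~\S\ref{sec:psh}: each section $s$ defines a Fubini--Study candidate psh metric $\phi_s\le\phi$ with $|s|_{m\phi_s}\equiv\|s\|_{m\phi}$, which must in turn lie below $\env(\phi)$. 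Consequently $\vol(\n_{m\phi},\n_{m\env(\phi)})=0$ for every $m$, and passage to the limit gives the claimed vanishing.

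Assume now that $\phi,\p$ are continuous psh. For $t\in[0,1]$ set $\phi_t\coloneqq(1-t)\phi+t\p$; each $\phi_t$ is again continuous psh, since the class of continuous psh metrics is stable under convex combinations (as follows directly from the Fubini--Study description of~\S\ref{sec:psh}). Define
\[
F(t)\coloneqq\vol(L,\phi_t,\p)\qquad\text{and}\qquad G(t)\coloneqq\en(\phi_t,\p).
\]
Both functions are continuous on $[0,1]$ (by Proposition~\ref{prop:volmetr}(iv) and the integral formula~\eqref{equ:en}) and vanish at $t=1$. The cocycle formula yields $F(t+s)-F(t)=\vol(L,\phi_t+s(\p-\phi),\phi_t)$, so Theorem~A applied to the continuous psh metric $\phi_t$ and the continuous function $\p-\phi$ gives
\[
F'(t)=\int_{X^\an}(\p-\phi)(dd^c\phi_t)^n,
\]
and the antiderivative property~\eqref{equ:endiff} combined with the cocycle identity for $\en$ produces the same formula for $G'(t)$. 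Hence $F-G$ is constant on $[0,1]$, and since both vanish at $t=1$ we conclude $F(0)=G(0)$, which is the statement.

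The decisive step is the derivative computation for $F$, which rests on the full strength of Theorem~A and thus carries the genuine analytic content of the argument; once differentiability of relative volumes is in hand, the remainder is essentially a one-variable calculus matter. The preliminary reduction to the psh case is less deep but still delicate, as in the non-Noetherian setting one must appeal to the Fubini--Study framework of~\cite{BE} to compare supnorms before and after passage to the psh envelope.
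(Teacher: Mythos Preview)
The paper does not actually prove this theorem; it is quoted from \cite[Corollary B]{BE} as a preliminary input. So there is no ``paper's own proof'' to compare against beyond that citation. Your argument is nonetheless worth discussing, because it offers a genuinely different derivation.

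Your Step~2 is correct and is the heart of the matter: once $\phi,\p$ are continuous psh, the cocycle identities for $\vol$ and $\en$ reduce the claim to equality of the $t$-derivatives along $\phi_t=(1-t)\phi+t\p$, and Theorem~A supplies exactly the derivative of the volume side. This is logically sound within the paper, since the proof of Theorem~A (Theorem~\ref{thm:main} and Lemma~\ref{lem:main}) nowhere invokes Theorem~\ref{thm:BE}; there is no circularity. What you gain is an economical route to the volume--energy identity \emph{after} Theorem~A is in hand, whereas the approach in \cite{BE} establishes Theorem~\ref{thm:BE} independently (via Deligne pairings and Knudsen--Mumford expansions) and then the present paper uses it, together with Theorem~A, to obtain the orthogonality and differentiability statements of Theorem~\ref{thm:difforth}. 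Your ordering thus inverts the dependency between Theorem~\ref{thm:BE} and Theorem~A relative to the paper.

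Two remarks on Step~1. First, the theorem as stated literally takes $\phi,\p$ psh, in which case $\env(\phi)=\phi$, $\env(\p)=\p$ and your reduction is vacuous; you are (correctly, given how the result is used at~\eqref{equ:BE} and in the introduction) treating the more general case of continuous metrics with continuous envelope. Second, your metric $\phi_s$ attached to a single section is singular along $\{s=0\}$ and is therefore not a continuous psh competitor in the definition of $\env(\phi)$ used here. The fix is routine---replace $\phi_s$ by $\max(\phi_s,\p_0-C)$ for a fixed continuous psh $\p_0$ and large $C$, or invoke directly the Fubini--Study envelope description from \cite{BE} which yields $\|s\|_{m\phi}=\|s\|_{m\env(\phi)}$---but the sentence as written is not quite complete.
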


%
%%%%%%%%%%%%%%%%%%%%%%%%%%%%%%%%%%%%%%%%%%%%%%%%%%%%%%%%%
%

%%%%%%%%%%%%%%%%%%%%%%%%%%%%%%%%%%%%%%%%%%%%%%%%%%%%%%%%%%%%%%%%%%%%%%%%%%%%%%%%%%%%%%%%%%%%%%%%%%%%%%%%%%%%%%%%%%%%%%%%%%%%%%%%%%%%%%%%%%%%%%%%%%%%%%%%%%%%%%%%%%%%%%%%%%%%%%%%%%%%%%%%%%%%%%%%%%%%%%%%%%%%%%%%%%%%%%%%%%%%%%%%%%%%%%%%%%%%%%%%%%%%%%%%%%%%%%%%%%%%%%%%%%%%%%%%%%%%%%%%%%%%%%%%%%%%%%%%%%%%%%%%%%%%%%%%%%%%%%%%%%%%%%%%%%%%%%%%%%%%%%%%%%%%%%%%%%%%%%%%%%%%%%%%%%%%%%%%%%%%%%%%%%%%%%%%%%%%%%%%%%%%%%%%%%%%%%%%%%%%%%%%%%%%%%%%%%%%%%%%%%%%%%%%%%%%%%%%%%%%%%%%%%%%%%%%%%%%%%%%%%%%%%%%%%%%%%%%%%%%%%%%%%%%%%%%%%%%%%%%%%%%%%%%%%%%%%%%%%%%%%%%%%%%%%%%%%%%%%%%%

\section{An asymptotic Riemann--Roch theorem}\label{sec:RR}
This section reviews some facts on the determinant of cohomology and Deligne pairings, following~\cite[Appendix A]{BE}, and uses this to prove a Riemann--Roch type formula for vertical Cartier divisors on models. We still denote by $X$ a geometrically reduced projective $K$-scheme of dimension $n$. 
%
%%%%%%%%%%%%%%%%%%%%%%%%%%%%%%%%%%%%%%%%%%%%%
%
\subsection{Determinant of cohomology and Deligne pairings}

The determinant of cohomology of a line bundle $L$ on $X$ is a line bundle $\la_X(L)$ over $\Spec K$, \ie a one-dimensional $K$-vector space; it can simply be described as
$$
\la_X(L):=\sum_{i=0}^n(-1)^i\det H^i(X,L),
$$
where we use additive notation for tensor products of line bundles. 

Consider now a model $(\Xcal,\Lcal)$ of $(X,L)$, with structure morphism $\pi:\Xcal\to S:=\Spec K^\circ$. Kiehl's theorem on (pseudo)coherence of direct images and the flatness of $\pi$ imply that the complex $R\pi_*\Lcal$ is perfect. Thus there exists a bounded complex of vector bundles $\cE^\bullet$ on $S$ with a quasi-isomorphism $\cE^\bullet\to R\pi_*\Lcal$ and the determinant of cohomology of $\cL$ is defined as 
$$
\la_\Xcal(\Lcal) \coloneqq \det\cE^\bullet=\sum_i(-1)^i\det\cE^i,
$$
this line bundle on $S$ being unique up to unique isomorphism of $\Q$-line bundles by~\cite{KM}. This construction commutes with base change, and $\la_\Xcal(\Lcal)$ is thus a $\Q$-model of $\la_X(L)$, cf.~\cite[Appendix A]{BE} for more details. 

By flatness of $\pi$, the  $\cO_S$-module $\pi_*\Lcal$ is torsion-free, and hence locally free. When $R^i\pi_*\Lcal$ is locally free for all $i$, \cite[p.43]{KM} yields 
\begin{equation} \label{concrete formula for det}
\la_\Xcal(\Lcal)=\sum_{i=0}^n(-1)^i\det R^i\pi_*\Lcal.
\end{equation}
Combining with Serre vanishing (see~\cite[Corollary A.12]{BE} for the relevant statement), we infer: 

\begin{lem}\label{lem:detcohom} If $\Lcal$ is ample and $\Ecal$ is any line bundle on $\Xcal$, then $\la_\Xcal(m\Lcal+\Ecal)$ coincides with the determinant of the vector bundle $\pi_*(m\Lcal+\Ecal)$ for all $m\gg 1$. 
\end{lem}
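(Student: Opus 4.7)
The plan is to reduce to formula~(\ref{concrete formula for det}) by showing that for $m \gg 1$ all the higher direct images $R^i \pi_*(m\Lcal + \Ecal)$ vanish, so the only surviving term in the alternating sum is $\det \pi_*(m\Lcal + \Ecal)$.

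First, I would invoke the relative version of Serre vanishing referenced in the text (\cite[Corollary A.12]{BE}): since $\Lcal$ is ample relative to $\pi$ and $\Ecal$ is coherent on the proper $K^\circ$-scheme $\Xcal$, there exists $m_0 \in \N$ such that $R^i \pi_*(m\Lcal + \Ecal) = 0$ for all $i \geq 1$ and all $m \geq m_0$. For such $m$ the higher direct images are trivially locally free on $S = \Spec K^\circ$.

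Next, I would observe that $\pi_*(m\Lcal + \Ecal)$ is also locally free on $S$. Indeed, by the flatness of $\pi$ the $\cO_S$-module $\pi_*(m\Lcal+\Ecal)$ is torsion-free, and a finitely generated torsion-free module over the valuation ring $K^\circ$ is free (this is the point already used just before the statement to justify that direct images of line bundles are torsion-free and hence locally free). Alternatively, since all higher direct images vanish, the perfect complex $R\pi_*(m\Lcal+\Ecal)$ reduces to the single module $\pi_*(m\Lcal+\Ecal)$ concentrated in degree zero, which is then automatically finitely presented and flat, hence locally free.

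Having verified that all $R^i\pi_*(m\Lcal + \Ecal)$ are locally free for $m \geq m_0$, I would apply the formula~(\ref{concrete formula for det}) to obtain
\[
\la_\Xcal(m\Lcal + \Ecal) = \sum_{i=0}^n (-1)^i \det R^i\pi_*(m\Lcal+\Ecal) = \det \pi_*(m\Lcal+\Ecal),
\]
which is the desired identity. The only nontrivial input is Serre vanishing; once that is in place, the argument is formal. I do not anticipate any real obstacle, since the relative Serre vanishing statement for proper $K^\circ$-schemes with an ample line bundle is exactly the content of \cite[Corollary A.12]{BE} cited in the text.
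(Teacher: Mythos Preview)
Your proposal is correct and matches the paper's approach exactly: the paper simply says ``Combining with Serre vanishing (see~\cite[Corollary A.12]{BE} for the relevant statement), we infer'' and states the lemma without further proof, so you have spelled out precisely the intended argument. The only content is indeed the relative Serre vanishing, after which formula~\eqref{concrete formula for det} applies directly.
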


The fundamental property of the determinant of cohomology, which is extracted in~\cite[Appendix A]{BE} from a paper of F.~Ducrot~\cite{Ducrot}, is that $\la_\Xcal$ admits a canonical structure of a \emph{polynomial functor of degree $n+1$}. By definition, this means that the $(n+1)$-st iterated difference 
\begin{equation}\label{equ:deligne}
\langle\Lcal_0,\dots,\Lcal_n\rangle_\Xcal:=\sum_{I\subset\{0,\dots,n\}}(-1)^{n+1-|I|}\la_\Xcal(\sum_{i\in I}\Lcal_i)
\end{equation}
has a structure of multilinear functor, compatible with its natural symmetry structure and with base change, and called the \emph{Deligne pairing}. As a consequence, we get for each line bundle $\Lcal$ on a model $\Xcal$ and $m\in\Z$ a polynomial expansion of $\Q$-line bundles
\begin{equation}\label{equ:KM}
\la_\Xcal(m\Lcal)=\frac{m^{n+1}}{(n+1)!}\langle\cL^{n+1}\rangle_\Xcal+ \dots,
\end{equation}
called the \emph{Knudsen--Mumford expansion}. Here and thereafter, we use the shorthand notation
$$
\langle\cL^{n+1}\rangle_\Xcal \coloneqq \langle  \underbrace{\cL, \dots, \cL}_{\text{$n+1$-times}} \rangle_\Xcal.
$$

\begin{lem}\label{lem:firstdiff} Let $\Lcal_0$ be a be a line bundle on a model $\Xcal$ of $X$. The polynomial structure of degree $n+1$ on $\la_\Xcal$ induces a polynomial structure of degree $n$ on 
	$$
	\Lcal\mapsto\la_\Xcal(\Lcal+\Lcal_0)-\la_\Xcal(\Lcal),
	$$
	whose $n$-th iterated difference further identifies with 
	$$
	(\Lcal_1,\dots,\Lcal_n)\mapsto\langle\Lcal_0,\Lcal_1,\dots,\Lcal_n\rangle_\Xcal.
	$$
\end{lem}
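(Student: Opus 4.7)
The statement is a purely formal consequence of the polynomial functor structure, so the proof will be bookkeeping with signs in the inclusion--exclusion formula (\ref{equ:deligne}). Set
$$
G(\Lcal) := \la_\Xcal(\Lcal + \Lcal_0) - \la_\Xcal(\Lcal).
$$
Because polynomial functors of degree $d$ are characterized by the fact that their $d$-th iterated difference is multilinear (and symmetric) while the $(d+1)$-st iterated difference vanishes, it suffices to prove that the $n$-th iterated difference of $G$ in variables $(\Lcal_1,\dots,\Lcal_n)$ coincides with $\langle\Lcal_0,\Lcal_1,\dots,\Lcal_n\rangle_\Xcal$. The polynomiality of $G$ of degree $n$ then follows from the polynomiality of $\la_\Xcal$ of degree $n+1$, which is exactly the hypothesis recalled just before the lemma.

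The plan is to compute the $n$-th iterated difference of $G$ directly from the definition:
$$
\sum_{J\subset\{1,\dots,n\}}(-1)^{n-|J|}G\!\left(\sum_{j\in J}\Lcal_j\right)
=\sum_{J\subset\{1,\dots,n\}}(-1)^{n-|J|}\left[\la_\Xcal\!\left(\Lcal_0+\sum_{j\in J}\Lcal_j\right)-\la_\Xcal\!\left(\sum_{j\in J}\Lcal_j\right)\right],
$$
and to match this against the formula
$$
\langle\Lcal_0,\dots,\Lcal_n\rangle_\Xcal=\sum_{I\subset\{0,\dots,n\}}(-1)^{n+1-|I|}\la_\Xcal\!\left(\sum_{i\in I}\Lcal_i\right)
$$
by splitting the sum on the right-hand side according to whether $0\in I$ or not. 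For $I=\{0\}\sqcup J$ with $J\subset\{1,\dots,n\}$, one has $(-1)^{n+1-|I|}=(-1)^{n-|J|}$, producing the first family of terms; for $I=J\subset\{1,\dots,n\}$, one has $(-1)^{n+1-|J|}=-(-1)^{n-|J|}$, producing the second family. These two contributions assemble exactly into the previous display, giving the desired identification.

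Since $\langle\Lcal_0,\Lcal_1,\dots,\Lcal_n\rangle_\Xcal$ is multilinear in its arguments by construction of the Deligne pairing, the $n$-th iterated difference of $G$ is multilinear in $(\Lcal_1,\dots,\Lcal_n)$, which is precisely what is meant by $G$ being a polynomial functor of degree $n$. There is no real obstacle beyond keeping track of the indexing set and the sign $(-1)^{n+1-|I|}$ versus $(-1)^{n-|J|}$; the content of the lemma is the observation that \emph{taking first differences} transports the polynomial structure one degree down and identifies the top iterated difference with one slot of the Deligne pairing specialized to $\Lcal_0$.
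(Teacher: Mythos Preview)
Your proof is correct and follows essentially the same approach as the paper: both compute the $n$-th iterated difference of $G$ directly, split the Deligne pairing sum over $I\subset\{0,\dots,n\}$ according to whether $0\in I$, match signs, and conclude from multilinearity of $\langle\Lcal_0,\Lcal_1,\dots,\Lcal_n\rangle_\Xcal$.
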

\begin{proof} By definition, the $n$-th iterated difference of $\Lcal\mapsto\la_\Xcal(\Lcal+\Lcal_0)-\la_\Xcal(\Lcal)$ is equal to 
	$$
	\sum_{J\subset\{1,\dots,n\}}(-1)^{n-|J|}\left(\la_\Xcal(\Lcal_0+\sum_{j\in J}\Lcal_j)-\la_\Xcal(\sum_{j\in J}\Lcal_j)\right)
	$$
	$$
	=\sum_{I\subset\{0,\dots,n\},\,0\in I}(-1)^{n+1-|I|}\la_\Xcal(\sum_{i\in I}\Lcal_i)+\sum_{I\subset\{0,\dots,n\},\,0\notin I}(-1)^{n+1-|I|}\la_\Xcal(\sum_{i\in I}\Lcal_i)
	$$
	$$
	=\langle\Lcal_0,\Lcal_1,\dots,\Lcal_n\rangle_\Xcal,
	$$
	by~\eqref{equ:deligne}. This finishes the proof, since the latter is a multilinear functor of $(\Lcal_1,\dots,\Lcal_n)$.
\end{proof}

We finally recall the following special case of~\cite[Theorem 8.18]{BE}. 

\begin{lem}\label{lem:delint} If $D$ is a vertical divisor on a model $\Xcal$ of $X$ with associated model function $\phi_D$ and $\Lcal_1,\dots,\Lcal_n$ are nef line bundles on $\Xcal$, then 
	\begin{equation*}\label{equ:deligne_int}
	\phi_{\langle\cO_\Xcal(D),\Lcal_1,\dots,\Lcal_n\rangle_\Xcal}=\int_{X^\an}\phi_D\,dd^c\phi_{\Lcal_1}\wedge\dots\wedge dd^c\phi_{\Lcal_n},
	\end{equation*}
	where we identify the model function $\phi_{\langle\cO_\Xcal(D),\Lcal_1,\dots,\Lcal_n\rangle_\Xcal}$ on $\Spec(K)$ with its unique value.
\end{lem}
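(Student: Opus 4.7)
The plan is to reduce both sides of the identity to the same intersection-theoretic computation on the special fiber of a sufficiently nice model. Both sides are multilinear and symmetric in $(\Lcal_1,\dots,\Lcal_n)$---the Deligne pairing by construction, the mixed Monge--Amp\`ere measure by its defining properties---and both are compatible with pullback along dominating morphisms of models as well as with base change to non-Archimedean extensions. Using these invariances together with Lemma \ref{lem:ample} and approximation of nef classes by ample ones, I would reduce to the case where $K$ is algebraically closed non-trivially valued, $\Xcal$ has reduced special fiber $\Xcal_s$, and each $\Lcal_i$ is an ample line bundle on $\Xcal$.

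In this setting Example \ref{model example for MA measure} makes the right-hand side explicit:
$$
\int_{\Xan}\phi_D\,dd^c\phi_{\Lcal_1}\wedge\dots\wedge dd^c\phi_{\Lcal_n}=\sum_Y(\Lcal_1|_Y\cdots\Lcal_n|_Y)\,\phi_D(x_Y),
$$
with $Y$ ranging over the irreducible components of $\Xcal_s$. For the left-hand side, verticality of $D$ means that the canonical meromorphic section $s_D$ of $\cO_\Xcal(D)$ restricts to a nonvanishing regular section of the trivial bundle on the generic fiber, inducing a canonical identification of $\langle\cO_\Xcal(D),\Lcal_1,\dots,\Lcal_n\rangle_\Xcal\otimes_{K^\circ}K$ with $K$. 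The Deligne pairing is thus realized as a fractional ideal of $K^\circ$, and its model function value on $\Spec(K)$ is $-\log$ of any generator. I would compute this generator by the iterated Deligne-norm construction: choose relatively regular sections $s_i\in H^0(\Xcal,m_i\Lcal_i)$ in general position with respect to $D$ (existing for $m_i\gg 1$ by~\cite[Proposition A.15]{BE}), invoke Lemma \ref{lem:relreg} to keep the restrictions regular when cutting successively by $V(s_i)$, and end up with a $0$-dimensional intersection on $\Xcal_s$ whose evaluation yields the intersection number $\bigl(\cO_\Xcal(D)\cdot\Lcal_1\cdots\Lcal_n\bigr)_{\Xcal_s}$. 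By the projection formula this equals $\sum_Y a_Y(\Lcal_1|_Y\cdots\Lcal_n|_Y)$, with $a_Y$ the multiplicity of $Y$ in $D$.

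Matching the two formulas comes down to the identity $\phi_D(x_Y)=a_Y\cdot v_Y$ on a reduced special fiber, where $v_Y=-\log|\varpi_Y|_{x_Y}$ for a local uniformizer $\varpi_Y$ of the DVR $\cO_{\Xcal,\xi_Y}$---the same factor $v_Y$ then emerges from the Deligne-norm recursion, so the two expressions coincide term-by-term. The main obstacle is precisely this bookkeeping of normalization constants in the non-Noetherian setting, where ordinary length has to be replaced by the content (cf.\ \S\ref{sec:norms}) and the successive restrictions require genuine regularity control; this is where the reduced-special-fiber reduction of Lemma \ref{lem:ample} and the relative-regularity machinery of Section \ref{sec:models} do the heavy lifting. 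A more conceptual alternative, pursued in~\cite[Theorem 8.18]{BE}, bypasses the explicit recursion by using Lemma \ref{lem:firstdiff} together with the Knudsen--Mumford expansion \eqref{equ:KM} to identify both sides as the leading-order coefficient of the asymptotic expansion of $\la_\Xcal(m\Lcal+\cO_\Xcal(D))-\la_\Xcal(m\Lcal)$ as $m\to\infty$, reducing everything to asymptotic Riemann--Roch.
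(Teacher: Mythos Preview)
The paper gives no proof here: the lemma is simply quoted as a special case of \cite[Theorem~8.18]{BE}. So there is nothing in the paper to compare your argument against beyond that citation.

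Your direct sketch contains a real gap. After reducing to $K$ algebraically closed, non-trivially valued, and $\Xcal_s$ reduced, you assert that $\cO_{\Xcal,\xi_Y}$ is a DVR with a local uniformizer $\varpi_Y$, and you phrase the matching as $\phi_D(x_Y)=a_Y\cdot v_Y$ with an integer multiplicity $a_Y$. This is precisely what fails in the densely valued case the paper is about: $K^{\circ\circ}$ is not principal, so the maximal ideal of $\cO_{\Xcal,\xi_Y}$ is not principal either, and there is no uniformizer and no integer order of vanishing. The value $\phi_D(x_Y)$ lies in the (dense) value group of $K$, and the corresponding weight on the Deligne-pairing side has to be produced via content rather than via an order along a DVR. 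You flag the length-vs-content issue in passing, but the key identity you write down already presupposes the discrete picture; the substantive work carried out in \cite[\S 8]{BE} is exactly to set up the Deligne-pairing/metric formalism so that it yields the correct real-valued weights without ever passing through a DVR. As written, your recursion does not go through over a non-Noetherian $K^\circ$.

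Finally, your ``more conceptual alternative'' would be circular within this paper: Theorem~\ref{thm:key} (the asymptotic Riemann--Roch formula) is \emph{deduced from} Lemma~\ref{lem:delint} via Lemma~\ref{lem:firstdiff}, not the other way around, so you cannot invoke it to establish the lemma here.
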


%
%%%%%%%%%%%%%%%%%%%%%%%%%%%%%%%%%%%%%%%%%%%%%
%
\subsection{An asymptotic Riemann--Roch theorem}
Pick a model of $X$, a line bundle $\Lcal$ on $\Xcal$, and an effective vertical Cartier divisor $D$ on $\Xcal$. By the coherence in Kiehl's direct image theorem, the $K^\circ$-module $H^0(D,\Lcal|_D)$ is finitely presented and torsion (see~\cite[Corollary A.12]{BE}). We denote by $h^0(D,\Lcal|_D)$ its content, as defined in~\S\ref{sec:norms}.

\begin{thm}\label{thm:key} Let $\Xcal$ be a model of $X$, $\Lcal$ an ample line bundle on $\Xcal$, and $D$ an effective vertical Cartier divisor on $\Xcal$. Then 
	$$
	h^0(D,m\Lcal|_D)=\frac{m^n}{n!}\int_{X^\an} \phi_D(dd^c\phi_\Lcal)^n+O(m^{n-1}). 
	$$
\end{thm}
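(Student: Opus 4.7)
The plan is to compute $h^0(D, m\Lcal|_D)$ as a difference of model metrics on determinants of cohomology, and then apply the Deligne pairing formalism of the preceding subsection.

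\smallskip

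\noindent\textbf{Step 1 (Short exact sequence).} Tensor the defining sequence $0\to\cO_\Xcal(-D)\to\cO_\Xcal\to\cO_D\to 0$ with $m\Lcal$ and apply $R\pi_*$ for $\pi\colon\Xcal\to\Spec K^\circ$. By the Serre vanishing cited in the proof of Lemma~\ref{lem:detcohom}, the relevant higher direct images vanish for $m\gg 1$, giving an exact sequence of $K^\circ$-modules
$$
0\to H^0(\Xcal,m\Lcal-D)\xrightarrow{\,\cdot s_D\,} H^0(\Xcal,m\Lcal)\to H^0(D,m\Lcal|_D)\to 0,
$$
where $s_D$ is the canonical section of $\cO_\Xcal(D)$. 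Since $D$ is vertical, $s_D$ has generic value $1$, so the first two terms identify with lattices in the same $K$-vector space $H^0(X,mL)$, while the third is torsion of finite presentation.

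\smallskip

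\noindent\textbf{Step 2 (Content as a difference of model metrics).} Passing to determinants gives an inclusion of lattices $\det H^0(\Xcal,m\Lcal-D)\subset\det H^0(\Xcal,m\Lcal)$ in $\det H^0(X,mL)$, whose quotient has the same content as $H^0(D,m\Lcal|_D)$ (Smith normal form in the non-Noetherian setting, as used to establish~\eqref{equ:contvol}). Combining~\eqref{equ:contvol} with the fact that the determinant of a lattice norm is the lattice norm of the determinant, and invoking Lemma~\ref{lem:detcohom} to identify $\la_\Xcal(m\Lcal)=\det H^0(\Xcal,m\Lcal)$ and $\la_\Xcal(m\Lcal-D)=\det H^0(\Xcal,m\Lcal-D)$ for $m\gg 1$, one obtains
$$
h^0(D,m\Lcal|_D)=\phi_{\la_\Xcal(m\Lcal)}-\phi_{\la_\Xcal(m\Lcal-D)},
$$
each side being a real number since $(\Spec K)^\an$ is a point.

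\smallskip

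\noindent\textbf{Step 3 (Polynomial expansion and Deligne integral).} By Lemma~\ref{lem:firstdiff} applied with $\Lcal_0=\cO_\Xcal(D)$, the functor $\Ecal\mapsto\la_\Xcal(\Ecal+D)-\la_\Xcal(\Ecal)$ is polynomial of degree $n$ with $n$-th iterated difference equal to the Deligne pairing $\langle\cO_\Xcal(D),\Lcal_1,\dots,\Lcal_n\rangle_\Xcal$. Evaluating at $\Ecal=m\Lcal-D$ produces a polynomial of degree $\le n$ in $m$; the shift by $-D$ only contributes to terms of order $<n$ in $m$, so the leading term comes from the $n$-th iterated difference at the diagonal, yielding an isomorphism of $\Q$-models
$$
\la_\Xcal(m\Lcal)-\la_\Xcal(m\Lcal-D)=\frac{m^n}{n!}\langle\cO_\Xcal(D),\Lcal^n\rangle_\Xcal+O(m^{n-1}).
$$
Passing to model metrics and applying Lemma~\ref{lem:delint} with the ample, hence nef, line bundle $\Lcal$ taken $n$ times gives
$$
\phi_{\la_\Xcal(m\Lcal)}-\phi_{\la_\Xcal(m\Lcal-D)}=\frac{m^n}{n!}\int_{X^\an}\phi_D\,(dd^c\phi_\Lcal)^n+O(m^{n-1}),
$$
and combining with Step 2 concludes the proof.

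\smallskip

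The most delicate point is Step 2, which requires carefully matching the content of a finitely presented torsion $K^\circ$-module with a difference of model metrics on $\det H^0(X,mL)$ over a possibly non-Noetherian valuation ring. Steps 1 and 3 are essentially formal once the Deligne pairing machinery recalled in this section is in hand; this is precisely the technical input that allows one to upgrade the discretely valued argument of~\cite{BGJKM} to the present generality.
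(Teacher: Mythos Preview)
Your proof is correct and follows essentially the same route as the paper: the short exact sequence from Serre vanishing, the identification of the content with a difference of model metrics on determinants via~\eqref{equ:contvol} and Lemma~\ref{lem:detcohom}, and the polynomial expansion via Lemmas~\ref{lem:firstdiff} and~\ref{lem:delint}. Your Step~3 is marginally more explicit than the paper's in spelling out that evaluating the first-difference functor at $m\Lcal-D$ rather than $m\Lcal$ only perturbs lower-order terms, but this is the same argument.
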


\begin{proof} By Serre vanishing~\cite[Theorem A.6]{BE}), we have $H^q(\Xcal,m\Lcal)=H^q(\Xcal,m\Lcal-D)=0$ for all $q\ge 1$ and $m\gg 1$. Restriction to $D$ thus yields an exact sequence
	$$
	0\to H^0(\Xcal,m\Lcal-D)\to H^0(\Xcal,m\Lcal)\to H^0(D,m\Lcal|_D)\to 0,
	$$
	which implies
	$$
	h^0(D,m\Lcal|_D)=\phi_{\det H^0(\Xcal,m\Lcal)}-\phi_{\det H^0(\Xcal,m\Lcal-D)},
	$$
	by~\eqref{equ:contvol}. By Lemma~\ref{lem:detcohom}, we further have 
	$$
	\det H^0(\Xcal,m\Lcal)=\la_\Xcal(m\Lcal),\quad \det H^0(\Xcal,m\Lcal-D)=\la_\Xcal(m\Lcal-D)
	$$
	and hence 
	$$
	h^0(D,m\Lcal|_D)=\phi_{\la_\Xcal(m\Lcal)-\la_\Xcal(m\Lcal-D)}.
	$$
	Now Lemma~\ref{lem:firstdiff} provides a polynomial expansion
	$$
	\la_\Xcal(m\Lcal)-\la_\Xcal(m\Lcal-D)=\frac{m^n}{n!}\langle\cO_\Xcal(D),\Lcal^n\rangle_\Xcal+...,
	$$
	and hence 
	$$
	h^0(D,m\Lcal|_D)=\frac{m^n}{n!}\phi_{\langle\cO_\Xcal(D),\Lcal^n\rangle_\Xcal}+O(m^{n-1})
	$$
	$$
	=\frac{m^n}{n!}\int_{X^\an} \phi_D (dd^c\phi_\Lcal)^n+O(m^{n-1}),
	$$
	by Lemma~\ref{lem:delint}. 
\end{proof}

%%%%%%%%%%%%%%%%%%%%%%%%%%%%%%%%%%%%%%%%%%%%%%%%%%%%%%%%%%%%%%%%%%%%%%%%%%%%%%%%%%%%%%%%%%%%%%%%%%%%%%%%%%%%%%%%%%%%%%%%%%%%%%%%%%%%%%%%%%%%%%%%%%%%%%%%%%%%%%%%%%%%%%%%%%%%%%%%%%%%%%%%%%%%%%%%%%%%%%%%%%%%%%%%%%%%%%%%%%%%%%%%%%%%%%%%%%%%%%%%%%%%%%%%%%%%%%%%%%%%%%%%%%%%%%%%%%%%%%%%%%%%%%%%%%%%%%%%%%%%%%%%%%%%%%%%%%%%%%%%%%%%%%%%%%%%%%%%%%%%%%%%%%%%%%%%%%%%%%%%%%%%%%%%%%%%%%%%%%%%%%%%%%%%%%%%%%%%%%%%%%%%%%%%%%%%%%%%%%%%%%%%%%%%%%%%%%%%%%%%%%%%%%%%%%%%%%%%%%%%%%%%%%%%%%%%%%%%%%%%%%%%%%%%%%%%%%%%%%%%%%%%%%%%%%%%%%%%%%%%%%%%%%%%%%%%%%%%%%%%%%%%%%%%%%%%%%%%%5%%%

\section{Differentiability and orthogonality}\label{section inequality}

In this section, we prove our main result on differentiability of relative volumes, which generalizes \cite[Theorem B]{BGJKM} from discretely valued non-Archimedean fields to arbitrary ones. In what follows, $X$ is a projective, geometrically reduced scheme of dimension $n$ over an arbitrary non-Archimedean field $K$, and $L$ is an \emph{ample} line bundle on $X$. 

%
%%%%%%%%%%%%%%%%%%%%%%%%%%%%%%%%%%%%%%%%%%%%%%%%%%%%
%
\subsection{Proof of Theorem A}

The following result corresponds to Theorem A in the introduction. 

\begin{thm}\label{thm:main} For any continuous psh metric $\phi$ on $L$ and continuous function $f$ on $X^\an$, we have 
	$$
	\frac{d}{dt}\bigg|_{t=0}\vol(L,\phi+tf,\phi)= \int_{X^\an} f\,(dd^c\phi)^n.
	$$
\end{thm}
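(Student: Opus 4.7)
The plan is to follow the overall strategy of~\cite{BGJKM}, reducing the statement to a concrete model-theoretic computation and then extracting the derivative from the polynomial functor structure of the determinant of cohomology, with length replaced by content in order to accommodate the non-Noetherian valuation ring.

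\emph{Reduction step.} By base change invariance of both $\vol$ (Proposition~\ref{prop:volmetr}(vi)) and the Monge--Amp\`ere operator, I may assume $K$ is algebraically closed and non-trivially valued. Lipschitz continuity of $\vol$ (Proposition~\ref{prop:volmetr}(iv)) together with weak continuity of $(dd^c\phi)^n$ under uniform convergence of continuous psh metrics lets me approximate $\phi$ by a psh model metric, and perturbing any nef model by a small ample class I may further take $\phi = \phi_\Acal$ with $\Acal$ an ample $\Q$-model. Similarly, density of model functions in $C^0(X^\an)$ (Stone--Weierstrass, valid since $K$ is non-trivially valued), linearity in $f$ of both sides of the identity to prove, and the decomposition $D = D_+ - D_-$ let me assume $f = \phi_D$ for $D$ an effective vertical $\Q$-Cartier divisor. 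By Lemma~\ref{lem:ample} I can put $\Acal$ and $D$ on a common model $\Xcal$ with reduced special fiber, and homogeneity (Proposition~\ref{prop:volmetr}(v)) clears denominators, reducing to $\Acal$ an honest ample line bundle and $D$ an honest effective vertical Cartier divisor.

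\emph{From relative volume to determinant of cohomology.} For a positive rational $t = k/m$ small enough that $\Acal + t\Ocal(D)$ is ample (possible by openness of ampleness), we have $m(\phi + tf) = m\phi_\Acal + k\phi_D = \phi_{m\Acal + k\Ocal(D)}$. Lemma~\ref{lem:intclosnorm}(i) identifies the supnorms with lattice norms: $\|\cdot\|_{m\phi} = \|\cdot\|_{H^0(\Xcal, m\Acal)}$ and $\|\cdot\|_{m(\phi+tf)} = \|\cdot\|_{H^0(\Xcal, m\Acal + k\Ocal(D))}$. For $m$ sufficiently large, uniformly for $k/m$ in a fixed small interval $[0, t_0]$ (by Serre vanishing in the bounded family $\{\Acal + s\Ocal(D)\}_{s \in [0, t_0]}$ of ample line bundles), higher cohomology of the corresponding sheaves vanishes. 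Combining Lemma~\ref{lem:detcohom} with~\eqref{equ:contvol} then yields
\begin{equation*}
\vol\bigl(\|\cdot\|_{m(\phi+tf)}, \|\cdot\|_{m\phi}\bigr) = \phi_{\lambda_\Xcal(m\Acal + k\Ocal(D)) - \lambda_\Xcal(m\Acal)}.
\end{equation*}

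\emph{Polynomial expansion and conclusion.} Since $\lambda_\Xcal$ is a polynomial functor of degree $n+1$, the function $F(a, b) \coloneqq \phi_{\lambda_\Xcal(a\Acal + b\Ocal(D))}$ is polynomial in $(a, b)$ of total degree $\leq n + 1$, with top-degree homogeneous part $\tfrac{1}{(n+1)!}\phi_{\langle(a\Acal + b\Ocal(D))^{n+1}\rangle_\Xcal}$ by the Knudsen--Mumford formula~\eqref{equ:KM}. Expanding this Deligne pairing multilinearly, the coefficient of $a^n b$ equals $\tfrac{1}{n!}\phi_{\langle \Acal^n, \Ocal(D)\rangle_\Xcal}$, which by Lemma~\ref{lem:delint} is $\tfrac{1}{n!}\int_{X^\an}\phi_D (dd^c\phi_\Acal)^n = \tfrac{1}{n!}\int_{X^\an} f (dd^c\phi)^n$. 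Writing $F(a,b) = \sum_{i+j \leq n+1} c_{ij} a^i b^j$, setting $k = tm$ and dividing by $m^{n+1}$, only the monomials with $i + j = n+1$ and $j \geq 1$ survive as $m \to \infty$, giving
\begin{equation*}
\vol(L, \phi+tf, \phi) = n!\sum_{j=1}^{n+1} c_{n+1-j,j}\, t^j = \Bigl(\int_{X^\an} f (dd^c\phi)^n\Bigr) t + O(t^2)
\end{equation*}
for small rational $t$, where $O(t^2)$ is in fact a polynomial of degree $\leq n+1$ in $t$. Lipschitz continuity of $\vol$ extends the identity to all small real $t$, and differentiating at $t = 0$ proves Theorem~\ref{thm:main}. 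The main obstacle is uniformity of Serre vanishing across the family $\{m\Acal + k\Ocal(D) : k/m \in [0, t_0]\}$ as $m \to \infty$, together with the careful bookkeeping needed to isolate the $a^n b$ coefficient via the Deligne pairing; the remaining steps are essentially formal consequences of the machinery from~\cite[Appendix A]{BE}.
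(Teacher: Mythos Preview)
Your approach differs from the paper's: rather than proving a Siu-type inequality (Lemma~\ref{lem:main}) via Yuan's filtration argument and then extracting the derivative by a scaling trick, you compute $\vol(L,\phi+tf,\phi)$ \emph{exactly} as a polynomial in $t$ from a two-variable Knudsen--Mumford expansion. This is more direct when it works, but the write-up has a genuine gap, and it is not the one you flag.

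The uniform Serre vanishing you worry about is in fact a non-issue: for each fixed rational $t=p/q$ you may take the limit defining $\vol$ along $m\in q\Z$, and then $m\Acal+tm\,\Ocal(D)=j(q\Acal+p\Ocal(D))$ is a multiple of a single ample line bundle, so ordinary Serre vanishing suffices. The resulting polynomial identity then holds for all small rational $t$, hence by continuity for all small real $t$.

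The real gap is your reduction of $\phi$ to a psh model metric. Lipschitz continuity of $\vol$ in the metrics gives $|\vol(L,\phi+tf,\phi)-\vol(L,\phi_i+tf,\phi_i)|\le 2V\sup_{X^\an}|\phi-\phi_i|$, a bound \emph{independent of $t$}; after dividing by $t$ it blows up as $t\to 0$, so you cannot pass the derivative to the limit this way. The paper circumvents this by proving an inequality (Lemma~\ref{lem:main}) that is valid for arbitrary continuous psh $\phi$, reducing to models only \emph{inside} its proof, where both sides are manifestly continuous under uniform convergence of $\phi,\psi_1,\psi_2$. Your route can be rescued by noting that the functions $t\mapsto\vol(L,\phi_i+tf,\phi_i)$ you produce are polynomials of degree $\le n+1$ converging pointwise, hence coefficient-wise, so their linear terms converge to $\int f\,(dd^c\phi)^n$; but this argument must be supplied explicitly.

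A smaller slip: the claimed ``linearity in $f$'' of the left-hand side is unjustified, since the derivative is not yet known to exist. In fact the decomposition $D=D_+-D_-$ is unnecessary: your polynomial computation goes through for any vertical Cartier divisor $D$ once $|t|$ is small enough that $\Acal+t\Ocal(D)$ is ample, and the identification of the relative volume of two lattice norms with the difference of their determinant model functions does not require one lattice to contain the other.
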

The key ingredient in the proof is the following general estimate, which can be viewed as a local analogue of the Siu-type inequality proved in~\cite{Yuan}. 

\begin{lem}\label{lem:main}
	Let $\phi$ be a continuous psh metric on $L$, $\psi_1,\psi_2$ be continuous psh metrics on an auxiliary ample line bundle $M$, and set $f:=\psi_1-\psi_2$ and $C:=((L+M)^n)-(L^n)>0$. Then 
	\begin{equation}\label{equ:volMA}
	C\inf_{x \in X^\an}f(x)\le\int_\Xan f \, ( dd^c \phi + dd^c \psi_1)^n-\vol(L,\phi+f,\phi)\le C\sup_{x \in X^\an} f(x). 
	\end{equation}
\end{lem}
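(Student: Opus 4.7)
The plan is to adapt the Arakelov-theoretic Siu inequality of Yuan~\cite{Yuan} (as employed in~\cite[\S 5]{BGJKM} for discretely valued $K$) to the present non-Noetherian setting, using the content of torsion $K^\circ$-modules together with the determinant-of-cohomology and Deligne pairing machinery of Section~\ref{sec:RR} in lieu of lengths and classical intersection theory.

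\emph{Reductions.} By Proposition~\ref{prop:volmetr}(vi) and base-change compatibility of mixed Monge--Amp\`ere measures, I would first reduce to $K$ algebraically closed and non-trivially valued. Using the Lipschitz continuity of $\vol(L,\cdot,\cdot)$ (Proposition~\ref{prop:volmetr}(iv)) and the continuity of the Monge--Amp\`ere operator under uniform convergence, I would then approximate $\phi,\psi_1,\psi_2$ by ample model metrics. By Proposition~\ref{prop:volmetr}(v) and Lemma~\ref{lem:ample}, one may assume $\phi=\phi_\Lcal$ and $\psi_i=\phi_{\cM_i}$ for honest ample line bundles $\Lcal,\cM_1,\cM_2$ on a common model $\Xcal$ of $X$ with reduced special fiber. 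Since the inequality~\eqref{equ:volMA} is invariant under $f\mapsto f+c$ (both sides shift by $cC$), one may further normalize to $\inf f=0$, \ie $\psi_1\geq\psi_2$; by Lemma~\ref{lem:intclosnorm}(ii), the vertical Cartier divisor $E$ with $\cO_\Xcal(E)=\cM_1-\cM_2$ is then effective, and $\phi_E=f\geq 0$.

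\emph{Content reformulation and Yuan-type estimate.} Set $\Lcal_0:=\Lcal+\cO_\Xcal(E)$, so that $\phi+f=\phi_{\Lcal_0}$. Lemma~\ref{lem:intclosnorm}(i) identifies $\|\cdot\|_{m\phi}$ and $\|\cdot\|_{m(\phi+f)}$ with the lattice norms of $H^0(\Xcal,m\Lcal)\subset H^0(\Xcal,m\Lcal_0)$. The exact sequence $0\to m\Lcal\to m\Lcal_0\to m\Lcal_0|_{mE}\to 0$ (combined with Serre vanishing for $m\gg 1$ and~\eqref{equ:contvol}) then gives $\vol(\|\cdot\|_{m(\phi+f)},\|\cdot\|_{m\phi})=h^0(mE,m\Lcal_0|_{mE})$. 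I would then introduce the auxiliary ample model $\cN_1:=\Lcal+\cM_1=\Lcal_0+\cM_2$ of $L+M$ (so $\phi_{\cN_1}=\phi+\psi_1$), and choose for $m\gg 1$ relatively regular sections $t\in H^0(\Xcal,m\cM_1)$, $s\in H^0(\Xcal,m\cM_2)$ (existing by~\cite[Proposition A.15]{BE}), normalized to unit supnorm with respect to $\psi_1$ and $\psi_2$. By Lemma~\ref{lem:relreg}, their restrictions to $mE$ are regular, and the two resulting exact sequences on $mE$ combine to yield
\[
h^0(mE,m\Lcal_0|_{mE})=h^0(mE,m\Lcal|_{mE})+h^0(T\cap mE,m\cN_1|_{T\cap mE})-h^0(S\cap mE,m\cN_1|_{S\cap mE}),
\]
where $T,S$ are the divisors of $t,s$. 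A layer-by-layer telescoping via the exact sequences $0\to m\Lcal+k\cO_\Xcal(E)\to m\Lcal+(k+1)\cO_\Xcal(E)\to (m\Lcal+(k+1)\cO_\Xcal(E))|_E\to 0$ for $k=0,\ldots,m-1$ then reduces each $h^0(mE,\cdot)$ to a Riemann-sum over $h^0(E,\cdot)$ terms, each controlled by Theorem~\ref{thm:key} with leading coefficient identified by Lemma~\ref{lem:delint} as a Deligne-pairing integral. The target expression $\int_{X^\an} f\,(dd^c\phi+dd^c\psi_1)^n$ emerges as the corresponding mixed intersection on $\cN_1$; the extrema $\sup f,\inf f$ bound the supnorms of the restrictions of $t,s$ that enter the side terms; and the coefficient $C=((L+M)^n)-(L^n)$ arises from the leading Knudsen--Mumford difference in~\eqref{equ:KM}.

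\emph{Main obstacle.} The delicate point is executing this telescoping with sufficient uniformity: one aggregates $m$ terms each controlled by Theorem~\ref{thm:key} with an $O(m^{n-1})$ error, so it is crucial that the implicit constants be uniform in the layer index $k\in\{0,\ldots,m-1\}$ (equivalently, uniform in $t=k/m\in[0,1]$ for the $\Q$-line bundle $\Lcal+t\cO_\Xcal(E)$). This uniformity ultimately rests on the polynomial functorial structure of the determinant of cohomology (Lemma~\ref{lem:firstdiff}) and the continuity of the Deligne pairing in its arguments, together with careful supnorm estimates for $t,s$ that convert the ``horizontal'' contents $h^0(T\cap mE,\cdot), h^0(S\cap mE,\cdot)$ into the correction terms $C\sup f$ and $C\inf f$.
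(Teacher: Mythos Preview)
Your reduction steps (base change to algebraically closed non-trivially valued $K$, approximation by model metrics on a model $\Xcal$ with reduced special fiber, homogeneity to clear denominators, normalization $\inf f=0$ so that the vertical divisor $E$ with $\cO_\Xcal(E)=\cM_1-\cM_2$ is effective) are essentially those of the paper. One small omission: psh metrics are a priori limits of \emph{nef} model metrics, and the paper passes to ample ones by perturbing with $\e$ times a fixed ample $\Q$-model; your plan glosses over this.

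The core estimate, however, diverges from the paper and has a genuine gap. Your displayed identity for $h^0(mE,m\Lcal_0|_{mE})$ would require the two restriction sequences on $mE$ to be exact on global sections, i.e.\ $H^1(mE,m\Lcal|_{mE})=H^1(mE,m\Lcal_0|_{mE})=0$. No Serre-type vanishing is available on the non-reduced vertical scheme $mE$ (the twist $m\Lcal-mE$ need not be ample on $\Xcal$), so this identity is unjustified. It is also unclear how the ``side terms'' $h^0(T\cap mE,\cdot)$, $h^0(S\cap mE,\cdot)$ would produce the bounds $C\sup f$, $C\inf f$; your sections $t,s$ are normalized to unit supnorm, so the extrema of $f$ do not enter in any evident way. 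Finally, the uniformity obstacle you yourself flag---summing $m$ layer estimates each with an $O(m^{n-1})$ error---is real and not resolved by your plan.

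The paper sidesteps all of this by never seeking an equality. It uses only the one-layer \emph{inequality}
\[
\vol\bigl(\|\cdot\|_{m\phi+jf},\|\cdot\|_{m\phi+(j-1)f}\bigr)\le h^0\bigl(E,(m\Lcal+jE)|_E\bigr),
\]
and then the key monotonicity trick: since $\cM_1|_E$ and $\cM_2|_E$ admit regular sections (Lemma~\ref{lem:relreg}), one has the \emph{uniform} bound
\[
h^0\bigl(E,(m\Lcal+j\cM_1-j\cM_2)|_E\bigr)\le h^0\bigl(E,m(\Lcal+\cM_1)|_E\bigr)\qquad(1\le j\le m).
\]
Summing gives $\vol(\|\cdot\|_{m(\phi+f)},\|\cdot\|_{m\phi})\le m\,h^0(E,m(\Lcal+\cM_1)|_E)$, and a \emph{single} application of Theorem~\ref{thm:key} yields the left-hand inequality in~\eqref{equ:volMA}; the right-hand one follows symmetrically after normalizing $\sup f=0$. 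No uniformity of error terms is needed, and the schemes $T\cap mE$, $S\cap mE$ never appear.
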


\begin{proof} In the proof, we assume that the reader is familiar with the properties of Monge--Amp\`ere measures and relative volumes given in \S \ref{subsection MA measures} and in \S \ref{subsection comparison volumes}.
	First, we give a few reduction steps. 
	
	By the invariance of relative volumes under ground field extension, 
	%(Proposition~\ref{prop:relvol}), 
	we can pass to a non-Archimedean extension and assume that $K$ is algebraically closed and non-trivially valued. Every continuous psh metric on an ample line bundle is then a uniform limit of metrics induced by nef $\Q$-models of $L$. By continuity of Monge--Amp\`ere measures and relative volumes with respect to uniform convergence, we may thus assume that there exist nef $\Q$-models $\Lcal$ and $\Mcal_i$ of $L$ and $M$, determined on a model $\Xcal$ of $X$, such that $\phi=\phi_\Lcal$ and $\psi_i=\phi_{\Mcal_i}$. Since $K$ is algebraically closed, we can further assume after passing to a higher model that $\Xcal$ has reduced special fiber, and that $L$ and $M$ admit ample $\Q$-models $\Lcal'$, $\Mcal'$ on $\Xcal$, by Lemma~\ref{lem:ample}.  
	Replacing $\Lcal$ and $\Mcal_i$ with $(1-\e)\Lcal+\e\Lcal'$ and $(1-\e)\Mcal_i+\e\Mcal'$, $0<\e\ll 1$, we are thus reduced to the case where $\Lcal$ and the $\Mcal_i$ themselves are ample $\Q$-line bundles, using again the continuity of Monge--Amp\`ere measures and relative volumes with respect to uniform convergence. Replacing $L$ and $M$ with large enough multiples and using the homogeneity property of relative volumes, 
	%(Proposition~\ref{prop:relvol}), 
	we can finally assume that $\Lcal$ and the $\Mcal_i$ are honest ample line bundles on $\Xcal$ such that each admits a relatively regular section, using~\cite[Proposition A.15]{BE}. 
	
	Observe that adding to $f$ a constant $a\in\R$ translates the quantity
	$$
	\int_\Xan f \, ( dd^c \phi + dd^c \psi_1)^n-\vol(L,\phi+f,\phi)
	$$ 
	by $aC$. In order to prove the left-hand inequality in~\eqref{equ:volMA}, we may thus replace $f$ with $f-\inf_{X^\an} f$ and assume $\inf_{X^\an} f= 0$. The unique vertical Cartier divisor $E$ on $\Xcal$ such that $\Mcal_1-\Mcal_2=\cO(E)$ satisfies $\phi_E=f \geq 0$, and $E$ is thus effective by Lemma~\ref{lem:intclosnorm}, since $\Xcal$ has reduced special fiber. Pick integers $1\le j\le m$. The restriction exact sequence 
	$$
	0\to H^0\left(\Xcal,m\Lcal+(j-1) E\right)\to H^0\left(\Xcal,m\Lcal+jE\right))\to H^0\left(E,(m\Lcal+jE)|_E\right)
	$$
	yields 
	$$
	\vol\left(\n_{m\phi+j f},\n_{m\phi+(j-1)f}\right)=\vol\left(\n_{H^0\left(\Xcal,m\Lcal+jE\right)},\n_{H^0\left(\Xcal,m\Lcal+(j-1)E\right)}\right)
	$$
	$$
	\le h^0\left(E,(m\Lcal+jE)|_E\right),  
	$$
	where the first equality follows from Lemma~\ref{lem:intclosnorm} and the inequality follows from~\eqref{equ:contvol}. Summing up over $j$ and using the cocycle property of relative volumes, we infer
	$$
	\vol\left(\n_{m(\phi+f)},\n_{m\phi}\right)\le \sum_{j=1}^m h^0\left(E,(m\Lcal+jE)|_E\right). 
	$$
	Since $\Mcal_1$ and $\Mcal_2$ admit relatively regular sections, their restrictions to $E$ admit regular sections as well, by Lemma~\ref{lem:relreg}. For $j=1,\dots,m$ we thus have 
	$$
	h^0\left(E,(m\Lcal+jE)|_E\right)=h^0\left(E,(m\Lcal+j\Mcal_1-j\Mcal_2)|_E\right)\le h^0\left(E,m(\Lcal+\Mcal_1)|_E\right), 
	$$
	and hence
	$$
	\vol\left(\n_{m(\phi+f)},\n_{m\phi}\right)\le m\,h^0\left(E,m(\Lcal+\Mcal_1)|_E\right).
	$$
	As a result, 
	$$
	\vol\left(L,\phi+f,\phi\right)=\lim_{m \to \infty} \frac{n!}{m^{n+1}}\vol\left(\n_{m(\phi+f)},\n_{m\phi}\right)
	$$
	$$
	\le\lim_{m \to \infty}\frac{n!}{m^n}h^0\left(E,m(\Lcal+\Mcal_1)|_E\right)=\int_\Xan f \, ( dd^c \phi + dd^c \psi_1)^{ n},
	$$
	where the last equality follows from Theorem~\ref{thm:key}. This concludes the proof of the left-hand inequality in~\eqref{equ:volMA}. 
	
	The proof of the right-hand inequality is very similar. In that case, we may replace $f$ with $f-\sup_{X^\an} f$ and assume $\sup_{X^\an} f= 0$. As a result, the vertical Cartier divisor $D$ with $\Ocal(D)=\cM_2-\cM_1$ is effective, using  $\phi_D=-f \geq 0$. The restriction exact sequence 
	$$
	0\to H^0\left(\Xcal,m\Lcal-(j+1)D\right)\to H^0\left(\Xcal,m\Lcal-jD\right)\to H^0\left(D,(m\Lcal-jD)|_D\right)
	$$
	then shows that
	$$
	\vol\left(\n_{m\phi},\n_{m(\phi+f)}\right)\le \sum_{j=0}^{m-1}h^0\left(D,(m\Lcal-jD)|_D\right)\le m\, h^0\left(D,m(\Lcal+\Mcal_1)|_D\right),
	$$
	which yields
	$$
	-\vol\left(L,\phi+f,\phi\right)=\vol\left(L,\phi,\phi+f\right)
	$$
	$$
	\le \int_{X^\an}\phi_D\left(dd^c(\phi+\psi_1)\right)^n=-\int_{X^\an} f\left(dd^c(\phi+\psi_1)\right)^n 
	$$
	proving the right-hand inequality and hence the claim.
\end{proof}

\begin{proof}[Proof of Theorem~\ref{thm:main}] Let $\phi$ be a continuous psh metric on $L$ and $f$ be a continuous function on $X^\an$. Assume first that there exist continuous psh metrics $\psi_1,\psi_2$ on an ample line bundle $M$ such that $f=\psi_1-\psi_2$. Pick $m\in\Z_{>0}$, $t\in(0,m^{-1}] $, and observe that $mtf=\psi_1-\psi_{2,t}$ where 
	$$
	\psi_{2,t}:=\psi_1-mt f=(1-mt)\psi_1+mt\psi_2
	$$
	is a continuous psh metric on $M$, as a convex combination of such metrics. By Lemma~\ref{lem:main}, we thus have
	$$
	tm C_m\inf_{x \in X^\an} f(x)\le mt\int_{X^\an} f\,(m dd^c\phi+dd^c\psi_1)^n-\vol(mL,m\phi+mt f,m\phi)\le tm C_m\sup_{x \in X^\an} f(x)$$
	with 
	$$
	C_m:=((mL+M)^n)-((mL)^n). 
	$$
	By homogeneity of relative volumes, $\vol(mL,m\phi+mt f,m\phi)=m^{n+1}\vol(L,\phi+tf,\phi)$, thus
	$$
	m^{-n}C_m\inf_{x \in X^\an} f(x)\le \int_{X^\an} f\,(dd^c\phi+m^{-1}dd^c\psi_1)^n-t^{-1}\vol(L,\phi+t f,\phi)\le m^{-n}C_m\sup_{x \in X^\an} f(x),
	$$
	and hence
	$$
	\int_{X^\an} f\,(dd^c\phi+m^{-1}dd^c\psi_1)^n-m^{-n}C_m\sup_{x \in X^\an} f(x)\le\liminf_{t\to 0_+} t^{-1}\vol(L,\phi+t f,\phi)
	$$
	$$
	\le\limsup_{t\to 0_+} t^{-1}\vol(L,\phi+t f,\phi)\le \int_{X^\an} f\,(dd^c\phi+m^{-1}dd^c\psi_1)^n-m^{-n}C_m\inf_{x \in X^\an} f(x).
	$$
	Now $m^{-n}C_m\to 0$ as $m\to\infty$, and we conclude as desired 
	$$
	\lim_{t\to 0_+} t^{-1}\vol(L,\phi+tf,\phi)=\int_{X^\an} f\,(dd^c\phi)^n. 
	$$
	Let now $f$ be an arbitrary continuous function on $X^\an$. By density of model functions in $C^0(X^\an)$, we can pick a sequence $(f_i)_{i \in \N}$ of model functions on $X^\an$ such that 
	$$
	\e_i:=\sup_{x \in X^\an}|f(x)-f_i(x)|\to 0.
	$$
	Pick any ample line bundle $M$ on $X$. Since $M$ admits ample $\Q$-models on arbitrarily high models \cite[Proposition 4.11, Lemma 4.12]{GM}, each model function $f_i$ can be written as $f_i=\psi_{i1}-\psi_{i2}$ where $\psi_{i1},\psi_{i2}$ are model metrics on  $a_iM$ for some non-zero $a_i \in \N$, determined by ample $\Q$-models $\Mcal_{i1},\Mcal_{i2}$ of $a_iM$.
	
	Since $f_i-\e_i\le f\le f_i+\e_i$, the monotonicity of relative volumes yields for each $t>0$
	$$
	\vol(L,\phi+t f_i,\phi)-tV\e_i\le\vol(L,\phi+tf,\phi)\le\vol(L,\phi+tf_i,\phi)+t V\e_i
	$$
	with $V:=(L^n)$. By the first part of the proof, we infer
	$$
	\int_{X^\an} f_i\,(dd^c\phi)^n-V\e_i\le\liminf_{t\to 0_+} t^{-1}\vol(L,\phi+tf,\phi)
	$$
	$$
	\le\limsup_{t\to 0_+} t^{-1}\vol(L,\phi+tf,\phi)\le\int_{X^\an} f_i\,(dd^c\phi)^n+V\e_i,
	$$
	and letting $i\to\infty$ yields as desired
	$$
	\lim_{t\to 0_+} t^{-1}\vol(L,\phi+tf,\phi)=\int_{X^\an} f\,(dd^c\phi)^n.
	$$
	Replacing $f$ by $-f$, we conclude that the above holds also for $t$ negative, and Theorem~\ref{thm:main} follows.
\end{proof}
%
%%%%%%%%%%%%%%%%%%%%%%%%%%%%%%%%%%%%%%%%%%%%%%%%%%%%
%
\subsection{Differentiability and orthogonality}
In this subsection, we assume that continuity of envelopes holds for $(X,L)$. The psh envelope $\env(\phi)$ of a continuous metric $\phi$ on $L$ is thus the greatest continuous psh metric on $L$ such that $\env(\phi)\le\phi$, see \S \ref{sec:psh}. Note that $\phi\mapsto\env(\phi)$ is monotone increasing, and satisfies $\env(\phi+c)=\env(\phi)+c$ for $c\in\R$, two properties that formally imply 
\begin{equation}\label{equ:lipP}
|\env(\phi)-\env(\p)|\le\sup_{x \in X^\an}|\phi(x)-\p(x)|
\end{equation}
for all continuous metrics $\phi,\p$ on $L$. 

To ease notation, we fix in what follows a reference continuous psh metric $\phi_0$ on $L$, and denote by
$$
\en(\phi):=\en(\phi,\phi_0)
$$
the relative energy of a continuous psh metric $\phi$ on $L$ with respect to $\phi_0$. By Theorem~\ref{thm:BE}, we have 
\begin{equation}\label{equ:BE}
\en(\env(\phi))=\vol(L,\phi,\phi_0)
\end{equation}
for all continuous metrics $\phi$ on $L$. 

\begin{defi} Given a continuous metric $\phi$ on $L$, we say that 
	\begin{itemize}
		\item \emph{$\en\circ\env$ is differentiable at $\phi$} if
		\begin{equation} \label{equ:diff}
		\frac{d}{dt}\bigg|_{t=0}\en(\env(\phi+t f))=\int_{X^\an}f\,(dd^c\env(\phi))^n. 
		\end{equation}
		for all $f\in C^0(X^\an)$ ; 
		\item \emph{orthogonality holds for $\phi$} if the Monge--Amp\`ere measure $(dd^c\env(\phi))^n$ is supported in the contact locus $\{\env(\phi)=\phi\}$, \ie 
		\begin{equation} \label{orthogonality property}
		\int_\Xan\left(\phi-\env(\phi)\right)(dd^c\env(\phi))^n=0. 
		\end{equation}
	\end{itemize}
\end{defi}

\begin{thm}\label{thm:difforth} Assume that continuity of envelopes holds for $(X,L)$. Then $\en\circ\env$ is differentiable at each continuous metric $\phi$ on $L$, and orthogonality holds for $\phi$. 
\end{thm}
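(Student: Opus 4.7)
The plan is to reduce both orthogonality and differentiability to Theorem~A applied at the continuous psh metric $\psi\coloneqq\env(\phi)$. The bridge is the identity $\en(\env(\phi))=\vol(L,\phi,\phi_0)$ of~\eqref{equ:BE}, which converts the nonlinear operator $\en\circ\env$ on continuous metrics into the relative volume functional, where monotonicity, the cocycle property, and base change are directly available. The main obstacle is that $\phi$ itself is in general not psh, so Theorem~A cannot be applied to $\phi$ directly; every infinitesimal statement has to be routed through $\psi$ and then transported back to $\phi$ using $\env$ and~\eqref{equ:BE}.

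\textbf{Orthogonality.} I would first apply Theorem~A at $\psi$ to the continuous function $g\coloneqq\phi-\psi\ge 0$. The key observation is that, for every $t\in[0,1]$, the interpolation $\psi+tg=(1-t)\psi+t\phi$ is sandwiched as $\psi\le\psi+tg\le\phi$, so monotonicity of $\env$ forces $\env(\psi+tg)=\psi$. Combining~\eqref{equ:BE} with the cocycle property for $\vol$ then shows that the function $t\mapsto\vol(L,\psi+tg,\psi)$ vanishes identically on $[0,1]$. Its right derivative at $0$ is therefore zero, and Theorem~A identifies this derivative with $\int_{\Xan} g\,(dd^c\psi)^n$, which is exactly the orthogonality relation~\eqref{orthogonality property}.

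\textbf{Upper bound for the derivative.} For a continuous test function $f$ on $\Xan$ and $t>0$, the cocycle for $\en$ together with the concavity estimate~\eqref{equ:Econc} gives
$$
\en(\env(\phi+tf))-\en(\psi)=\en(\env(\phi+tf),\psi)\le\int_{\Xan}\bigl(\env(\phi+tf)-\psi\bigr)(dd^c\psi)^n.
$$
Since $\env(\phi+tf)\le\phi+tf$ and $(dd^c\psi)^n$ is a positive measure, the right-hand side is bounded above by $\int_{\Xan}(\phi-\psi)(dd^c\psi)^n+t\int_{\Xan} f\,(dd^c\psi)^n$, which collapses to $t\int_{\Xan} f\,(dd^c\psi)^n$ thanks to the orthogonality relation just established.

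\textbf{Lower bound and conclusion.} For the reverse inequality, the pointwise bound $\psi+tf\le\phi+tf$ and monotonicity of $\vol$, combined with~\eqref{equ:BE}, yield $\en(\env(\psi+tf))\le\en(\env(\phi+tf))$. Applying Theorem~A at the psh metric $\psi$ gives $\vol(L,\psi+tf,\psi)=t\int_{\Xan} f\,(dd^c\psi)^n+o(t)$, and the cocycle together with~\eqref{equ:BE} translates this into $\en(\env(\psi+tf))=\en(\psi)+t\int_{\Xan} f\,(dd^c\psi)^n+o(t)$. Combining with the upper bound, the right derivative of $t\mapsto\en(\env(\phi+tf))$ at $t=0$ exists and equals $\int_{\Xan} f\,(dd^c\psi)^n$; substituting $-f$ for $f$ produces the matching left derivative, so the two-sided derivative exists and gives~\eqref{equ:diff}.
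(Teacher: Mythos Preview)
Your proof is correct and, for orthogonality and the upper bound on the difference quotient, essentially coincides with the paper's argument. The paper organizes the logic as a separate equivalence lemma (Lemma~\ref{lem:difforth}): differentiability of $\en\circ\env$ at all continuous metrics $\Leftrightarrow$ differentiability at psh metrics $\Leftrightarrow$ orthogonality for all continuous metrics; Theorem~A is then invoked once (via~\eqref{equ:BE}) to verify the middle condition. Where you genuinely diverge is in the lower bound. The paper uses the \emph{other} side of the concavity sandwich,
\[
\en(\env(\phi+tf))-\en(\psi)\ge\int_{\Xan}\bigl(\env(\phi+tf)-\psi\bigr)(dd^c\env(\phi+tf))^n,
\]
then applies orthogonality at the perturbed metric $\phi+tf$ together with $\psi\le\phi$ to bound the integrand from below by $tf$, and passes to the limit using the Lipschitz estimate~\eqref{equ:lipP} for $\env$ and continuity of Monge--Amp\`ere measures under uniform convergence. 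Your route---monotonicity of $\vol$ plus a second application of Theorem~A at $\psi$---is arguably more direct: it avoids orthogonality at $\phi+tf$ and makes no use of the continuity of Monge--Amp\`ere measures. The paper's formulation, in exchange, isolates the implication ``orthogonality $\Rightarrow$ differentiability'' as a statement that holds independently of Theorem~A and~\eqref{equ:BE}, relying only on concavity of $\en$ and stability of $(dd^c\cdot)^n$.
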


\begin{lem}\label{lem:difforth} The following properties are equivalent:
	\begin{itemize}
		\item[(i)] $\en\circ\env$ is differentiable at all continuous metrics on $L$;
		\item[(ii)] $\en\circ\env$ is differentiable at all continuous psh metrics on $L$;
		\item[(iii)] orthogonality holds for all continuous metrics on $L$. 
	\end{itemize}
\end{lem}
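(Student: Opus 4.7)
The plan is to prove the cyclic chain (i)$\Rightarrow$(ii)$\Rightarrow$(iii)$\Rightarrow$(i). The first implication is tautological, and the remaining two rely on the tangent-line concavity inequality \eqref{equ:Econc}, the Lipschitz estimate \eqref{equ:lipP} for $\env$, and weak continuity of Chambert--Loir measures under uniform convergence of continuous psh metrics.

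For (ii)$\Rightarrow$(iii), I would use an envelope-stabilization trick. Fix a continuous metric $\phi$ on $L$ and set $f\coloneqq\phi-\env(\phi)\ge 0$, which is continuous on $X^\an$. For every $t\in[0,1]$ the bracketing $\env(\phi)\le\env(\phi)+tf\le\phi$, combined with the monotonicity of $\env$ and the identity $\env(\env(\phi))=\env(\phi)$, forces $\env(\env(\phi)+tf)=\env(\phi)$. Hence the map $t\mapsto\en(\env(\env(\phi)+tf))$ is constant on $[0,1]$, so its right derivative at $t=0$ vanishes. On the other hand $\env(\phi)$ is continuous psh, so assumption (ii) forces this derivative to equal $\int_{X^\an}f\,(dd^c\env(\phi))^n$, which must therefore be zero. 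Since $f\ge 0$, this is exactly the orthogonality property \eqref{orthogonality property} at $\phi$.

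The main step is (iii)$\Rightarrow$(i). Fix a continuous metric $\phi$, a continuous function $f$, and write $\phi_t\coloneqq\phi+tf$. Applying \eqref{equ:Econc} to the continuous psh pair $(\env(\phi_t),\env(\phi))$, using $\env(\phi_t)\le\phi_t$, and invoking orthogonality (iii) at $\phi$ to kill the term $\int(\phi-\env(\phi))(dd^c\env(\phi))^n$, yields
$$
\en(\env(\phi_t))-\en(\env(\phi))\le t\int_{X^\an}f\,(dd^c\env(\phi))^n.
$$
Applying \eqref{equ:Econc} in the reversed order, using $\env(\phi)\le\phi$, and invoking orthogonality (iii) at $\phi_t$ yields the matching lower bound
$$
\en(\env(\phi_t))-\en(\env(\phi))\ge t\int_{X^\an}f\,(dd^c\env(\phi_t))^n.
$$
By \eqref{equ:lipP}, $\env(\phi_t)\to\env(\phi)$ uniformly as $t\to 0$, hence $(dd^c\env(\phi_t))^n\to(dd^c\env(\phi))^n$ weakly. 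Dividing the sandwich by $t$, with the sign of $t$ tracked separately for $t>0$ and $t<0$ so that the inequalities land in the right direction, delivers both one-sided derivatives of $t\mapsto\en(\env(\phi_t))$ at $0$, each equal to $\int_{X^\an}f\,(dd^c\env(\phi))^n$; this is precisely (i).

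The main obstacle is the lower bound in the sandwich: it genuinely requires orthogonality at the moving metric $\phi_t$, not merely at $\phi$, so the hypothesis (iii) cannot be weakened to orthogonality at a single metric. The remaining delicate point is sign bookkeeping when dividing by $t$, together with the justification that the weak convergence of Chambert--Loir measures along uniform convergence of psh envelopes applies to the fixed continuous test function $f$.
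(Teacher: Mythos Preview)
Your proof is correct and follows essentially the same route as the paper's: the same envelope-stabilization argument for (ii)$\Rightarrow$(iii), and the same concavity sandwich combined with orthogonality at both $\phi$ and $\phi_t$ plus weak continuity of Monge--Amp\`ere measures for (iii)$\Rightarrow$(i). The only cosmetic difference is that the paper treats $t>0$ and then replaces $f$ by $-f$, whereas you track the sign of $t$ directly; the content is the same.
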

\begin{proof} (i)$\Longrightarrow$(ii) is trivial. We reproduce the simple argument for (ii)$\Longrightarrow$(iii) given in~\cite[Theorem 6.3.2]{BGJKM}. Pick a continuous metric $\phi$, and set $\psi:=\env(\phi)$ and $f:=\phi-\psi$. For each $t\in[0,1]$, $\psi+t f=(1-t)\env(\phi)+t\phi$ satisfies $\env(\phi)\le\psi+t f\le\phi$, and hence $\env(\phi)=\env(\psi+tf)$. Differentiability of $\en\circ\env$ at $\p$ thus yields
	$$
	0=\frac{d}{dt}\bigg|_{t=0}\en(\env({\p} +tf))=\int_{X^\an} f\,(dd^c\env(\phi))^n=\int(\phi-\env(\phi))(dd^c\env(\phi))^n,
	$$
	which proves that $\phi$ satisfies the orthogonality property. Finally, the following simple argument for (iii)$\Longrightarrow$(i) is similar to the proof of \cite[Lemma 6.13]{LN}. Pick a continuous metric $\phi$ and a continuous function $f$. By concavity of $\en$ (see ~\eqref{equ:Econc}), we have
	$$
	\int_{X^\an}\left(\env(\phi+tf)-\env(\phi)\right)(dd^c\env(\phi+t f))^n\le\en(\env(\phi+tf))-\en(\env(\phi))
	$$
	$$
	\le\int_{X^\an}\left(\env(\phi+tf)-\env(\phi)\right)(dd^c\env(\phi))^n. 
	$$
	Using the orthogonality property at $\phi+tf$ and $\phi$ together with $\env(\phi)\le\phi$ and $\env(\phi+tf)\le\phi+t f$, this yields for $t>0$
	$$
	\int_{X^\an} f\,(dd^c\env(\phi+tf))^n\le\frac{\en(\env(\phi+tf))-\en(\env(\phi))}{t}\le\int_{X^\an} f\,(dd^c\env(\phi))^n,
	$$
	and hence 
	$$
	\lim_{t\to 0_+}\frac{\en(\env(\phi+tf))-\en(\env(\phi))}{t}=\int_{X^\an} f\,(dd^c\env(\phi))^n,
	$$
	by uniform convergence of $\env(\phi+tf)$ to $\env(\phi)$, cf.~\eqref{equ:lipP}. Replacing $f$ by $-f$ proves (iii)$\Longrightarrow$(i).
\end{proof}

\begin{proof}[Proof of Theorem~\ref{thm:difforth}] Taking into account~\eqref{equ:BE}, Theorem~\ref{thm:main} precisely says that $\en\circ\env$ is differentiable at every continuous psh metric on $L$, and Theorem~\ref{thm:difforth} thus follows from  Lemma~\ref{lem:difforth}.
\end{proof}
%
%%%%%%%%%%%%%%%%%%%%%%%%%%%%%%%%%%%%%%%%%%%%%%%%%%%%
%
\subsection{An application to Monge--Amp\`ere equations}
In this subsection we still assume that continuity of envelopes holds for $(X,L)$. As in~\cite{trivval}, we define a (possibly singular) psh metric on $L$ as a decreasing limit of continuous psh metrics, not identically $-\infty$ on any component of $X$. A subset $E\subset X^\an$ is \emph{pluripolar} if there exists a psh metric $\phi$ with $\phi\equiv-\infty$ on $E$, this condition being easily seen to be independent of the choice of ample line bundle $L$. If $E$ is nonpluripolar, one proves exactly as in~\cite[Proposition 5.2(ii)]{nakstab} that for each continuous metric $\phi$ on $L$ there exists a constant $C>0$ such that 
\begin{equation}\label{equ:izunpp}
\sup_{x \in X^\an}(\psi(x)-\phi(x))\le\sup_{x \in E}(\psi(x)-\phi(x))+C
\end{equation}
for all psh metrics $\psi$ on $L$. Given a nonpluripolar compact $E\subset X^\an$ and a continuous metric $\phi$ on $L$, we can thus define the \emph{equilibrium metric} of the pair $(E,\phi)$ as
$$
\env(E,\phi):=\sup\{\text{$\psi$ psh metric on $L$}\mid\psi\le\phi\text{ on }E\}. 
$$
Since every psh metric $\p$ on $L$ is a decreasing limit of continuous psh metrics, Dini's lemma easily yields
$$
\env(E,\phi)=\sup\{\psi\text{ continuous psh metric on }L\mid\psi\le\phi\text{ on }E\},
$$
(see \cite[Proposition 7.26]{BE}) which is thus lsc. By~\eqref{equ:izunpp}, the family of metrics $\psi$ in the definition of $\env(E,\phi)$ is uniformly bounded from above, the usc regularization $\env(E,\phi)^\star$ is thus psh, since we assume continuity of envelopes (see \cite[Lemma 7.30]{BE}). As a result, $\env(E,\phi)^\star\le\phi$ holds on $E$ if and only if $\env(E,\phi)=\env(E,\phi)^\star$ is continuous. Following classical terminology in pluripotential theory, we then say that $(E,\phi)$ is \emph{$L$-regular}. 

For a nonpluripolar point $x\in X^\an$, $L$-regularity of $(\{x\},\phi)$ is independent of the continuous metric $\phi$, as the latter only appears through its value at $x$, and we then simply say that \emph{$x$ is $L$-regular}. 

\begin{exam} By~\cite[Lemma 2.20, Theorem 2.21]{trivval}, every quasimonomial point of $X^\an$ is nonpluripolar. 
\end{exam} 
Conjecturally, every nonpluripolar point should be $L$-regular; this has been shown in~\cite[Theorem 5.13]{nakstab} when $X$ is smooth, $K$ has residue characteristic $0$, and is trivially or discretely valued. 

Relying on the variational argument developed in~\cite{BBGZ,nama}, we prove the following result, which corresponds to Corollary C in the introduction. 

\begin{thm}\label{thm:MApoint} Assume that continuity of envelopes holds for $(X,L)$. Let $x\in X^\an$ be a nonpluripolar point, $\phi$ a continuous metric on $L$, and assume that $x$ is $L$-regular, so that
	$$
	\phi_x:=\env(\{x\},\phi)=\sup\{\psi\text{ psh metric on }L\mid \psi(x)\le\phi(x)\}
	$$
	is continuous and psh. Then $V^{-1}(dd^c\phi_x)^n=\d_x$ with $V:=(L^n)$. 
\end{thm}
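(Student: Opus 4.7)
The plan is to follow the variational strategy of~\cite{BBGZ,nama} by identifying $\phi_x$ as the maximizer of an auxiliary functional whose Euler--Lagrange equation encodes the desired Monge--Amp\`ere identity. Specifically, I would consider the functional
$$F(\psi) := \en(\psi) - V\psi(x)$$
on the space of continuous psh metrics on $L$; it is translation invariant, since $\en(\psi + c) = \en(\psi) + Vc$.

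First I would show that $F$ attains its maximum at $\phi_x$. For any continuous psh metric $\psi$ on $L$, set $c := \phi_x(x) - \psi(x)$; then $(\psi + c)(x) = \phi_x(x) \le \phi(x)$, so $\psi + c$ is a competitor in the sup defining $\phi_x$, whence $\psi + c \le \phi_x$. Combining the translation law with the monotonicity of $\en$ along convex combinations (a consequence of~\eqref{equ:Econc} and~\eqref{equ:endiff}, applied to the segment $(1-s)(\psi+c) + s\phi_x$) gives $\en(\psi) + Vc \le \en(\phi_x)$, i.e.\ $F(\psi) \le F(\phi_x)$.

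Next I would test this extremality against perturbations by arbitrary continuous functions. Fix $f \in C^0(X^\an)$. By the continuity of envelopes assumption for $(X,L)$, $\env(\phi_x + tf)$ is a continuous psh metric for every $t \in \R$, satisfying $\env(\phi_x + tf) \le \phi_x + tf$; evaluating at $x$ gives $\env(\phi_x + tf)(x) - \phi_x(x) \le tf(x)$. Since $\phi_x$ is psh one has $\env(\phi_x) = \phi_x$, so the maximality $F(\env(\phi_x + tf)) \le F(\phi_x)$ rearranges as
$$\en(\env(\phi_x + tf)) - \en(\phi_x) \le V\bigl[\env(\phi_x + tf)(x) - \phi_x(x)\bigr] \le Vtf(x).$$
Dividing by $t > 0$ and passing to the limit, the differentiability of $\en\circ\env$ at the continuous metric $\phi_x$ (Theorem~\ref{thm:difforth}) yields $\int_{X^\an} f\,(dd^c\phi_x)^n \le Vf(x)$. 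Replacing $f$ by $-f$ gives the reverse inequality, so $\int_{X^\an} f\,(dd^c\phi_x)^n = Vf(x)$ for every $f \in C^0(X^\an)$, which means $(dd^c\phi_x)^n = V\delta_x$, as required.

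The conceptual heart of the argument is the variational identification of $\phi_x$ as the maximizer of $F$; this works precisely because $\phi_x$ is defined by a constraint at the single point $x$, which can be enforced by an additive normalization on competitors. After this observation the rest is a one-sided comparison pushed through the differentiability of $\en\circ\env$ supplied by Theorem~\ref{thm:difforth}, and no further substantive obstacle arises.
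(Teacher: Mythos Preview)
Your proof is correct and follows essentially the same route as the paper. The paper's argument is a streamlined version of yours: rather than introducing the functional $F$ and proving it is maximized at $\phi_x$ over all continuous psh metrics, it applies the comparison directly to the single competitor $\env(\phi_x+tf)-tf(x)$, observes that this psh metric has value $\le\phi(x)=\phi_x(x)$ at $x$ and hence lies below $\phi_x$, and then invokes monotonicity of $\en$ and Theorem~\ref{thm:difforth} exactly as you do.
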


\begin{proof} Pick $f\in C^0(X^\an)$. Since $\env(\phi_x+f)-f(x)$ is a continuous psh metric on $L$ and satisfies 
	$$
	\env(\phi_x+f)(x)-f(x)\le\phi_x(x)=\phi(x),
	$$
	we have $\env(\phi_x+f)-f(x)\le\phi_x$ by definition of the latter, and hence $\en(\env(\phi_x+f))-V f(x)\le\en(\phi_x)$. Applying this to $tf$, $t>0$, we infer
	$$
	t^{-1}\left(\en(\env(\phi_x+t f))-\en(\phi_x)\right)\le V f(x),
	$$
	and Theorem~\ref{thm:difforth} thus yields
	$$
	\int_{X^\an} f\,(dd^c\phi_x)^n\le V f(x).
	$$
	Replacing $f$ with $-f$ concludes the proof. 
\end{proof}

%%%%%%%%%%%%%%%%%%%%%%%%%%%%%%%%%%%%%%%%%%%%%%%%%%%%%%%%%%%%%%%%%%%%%%%%%%%%%%%%%%%%%%%%%%%%%%%%%%%%%%%%%%%%%%%%%%%%%%%%%%%%%%%%%%%%%%%%%%%%%%%%%%%%%%%%%%%%%%%%%%%%%%%%%%%%%%%%%%%%%%%%%%%%%%%%%%%%%%%%%%%%%%%%%%%%%%%%%%%%%%%%%%%%%%%%%%%%%%%%%%%%%%%%%%%%%%%%%%%%%%%%%%%%%%%%%%%%%%%%%%%%%%%%%%%%%%%%%%%%%%%%%%%%%%%%%%%%%%%%%%%%%%%%%%%%%%%%%%%%%%%%%%%%%%%%%%%%%%%%%%%%%%%%%%%%%%%%%%%%%%%%%%%%%%%%%%%%%%%%%%%%%%%%%%%%%%%%%%%%%%%%%%%%%%%%%%%%%%%%%%%%%%%%%%%%%%%%%%%%%%%%%%%%%%%%%%%%%%%%%%%%%%%%%%%%%%%%%%%%%%%%%%%%%%%%%%%%%%%%%%%%%%%%%%%%%%%%%%%%%%%%%%%%%%%%%%%%%%%%%

\bibliographystyle{alpha}
\bibliography{bibli}

\end{document}